\newcommand{\chg}{}
\newcommand{\chge}{}
\newcommand{\argmin}{{\rm Argmin}\;}
\newcommand{\PP}{\mathcal P}
\renewcommand{\P}{\mathcal P}
\newcommand{\J}{\mathcal J}
\newcommand{\C}{\mathcal C}
\newcommand{\M}{{\mathcal W}}
\renewcommand{\S}{{\mathcal S}}
\newcommand{\R}{\mathbb R}
\newcommand{\RR}{\mathbb R}
\newcommand{\dd}{\;{\rm d}}
\newcommand{\supp}{{\rm Supp}\,}
\newcommand{\dom}{{\rm dom}\,}
\newcommand{\dist}{{\rm dist}\,}
\newcommand{\proj}{{\rm proj}\,}
\newcommand{\rhor}{\rho^*}
\newcommand{\cg}{c_g}
\newcommand{\cf}{c_f}
\newcommand{\Pd}{\mathcal{P}_2(\R^d)}
\newcommand{\go}{\Gamma_{\text{\rm opt}}}
\newtheorem{theorem}{Theorem}
\newtheorem{proposition}{Proposition}
\newtheorem{lemma}{Lemma}
\newtheorem{claim}{Claim}
\theoremstyle{remark}
\newtheorem{definition}{Definition}
\newtheorem{remark}{Remark}
\newtheorem{property}{Property}
\title[A family of functional inequalities]{A family of functional inequalities: \L ojasiewicz inequalities and displacement convex functions}
\author{Adrien Blanchet \& J\'er\^ome Bolte}
\address{Toulouse School of Economics, Universit\'e Toulouse Capitole, Manufacture des Tabacs, 21 all\'{e}e de Brienne, 31015, Toulouse, France. 
}
\email{adrien.blanchet@tse-fr.eu, jerome.bolte@tse-fr.eu}
\keywords{\L ojasiewicz inequality, Functional inequalities, Gradient flows, Optimal Transport, Monge-Kantorovich distance}
\date{\today}
\begin{document}

\maketitle

\begin{abstract}
For displacement convex functionals in the probability space equip\-ped with the Monge-Kantorovich metric we prove the equivalence between the gradient and functional type \L oja\-sie\-wicz inequalities. \chg{We also discuss the more general case of $\lambda$-convex functions and we provide a general convergence theorem for the corresponding gradient dynamics. Specialising our results to the Boltzmann entropy, we recover  Otto-Villani's theorem asserting the equivalence between logarithmic Sobolev and Talagrand's inequalities. The choice of power-type entropies shows a new  equivalence between Gagliardo-Nirenberg inequality and a nonlinear Talagrand inequality. Some nonconvex results and other types of equivalences are discussed. }\end{abstract}


\section{Introduction}
\chg{\L ojasiewicz inequalities are known to be extremely powerful tools for studying the long-time behaviour of dissipative
systems in an Euclidean or 
Hilbert space, see e.g., \cite{simon83,haraux,chill03,bolte10} and references therein. Their connection with the asymptotics  of gradient flows comes from the fact that one of this inequality asserts that the underlying energy can be rescaled near critical points into a sharp function\footnote{See Remark~\ref{r:loja} for further explanations}. A consequence of this inequality is that gradient curves can be shown to have finite length through the choice of an adequate Lyapunov function, see~\cite{loja65,kurd,bolte10}\footnote{See also  the proof of Theorem~\ref{thm:main}}.

In parallel the study of large time asymptotics of various PDEs, also based on energy techniques, was developed in close conjunction with functional inequalities. A classical study protocol is to evidence  Lyapounov functionals and use functional inequalities to derive quantitative contractive properties of the flow. The heat equation provides an elementary but illustrative example of this approach: the Boltzmann entropy gives a Lyapunov functional while  the Logarithmic-Sobolev inequality  ensures the exponential convergence of the solution curve to a self-similar profile. Numerous applications of these techniques, as well as their
stochastic counterparts, can be found in  e.g., \cite{bakry85,carrillo06,ane,cordero,bobkov01,djellout04}. Standard references for functional inequalities are for instance \cite{carrillo03,ledoux05,ledoux93,gozlan2010}.


%
%
%
%

In this article we show  that the joint use of  metric gradient flows and \L ojasiewicz inequalities 
allows for a systematic and transparent treatment of these evolution equations.   In this regard the ``Riemannian structure"  of  the set of probability measures endowed with the Monge-Kantorovich distance (see \cite{otto01,ags,gangbo96,villani03,jordan}) plays a fundamental role in our approach. It allows in particular to interpret some PDEs as gradient flows, like Fokker-Planck, porous medium, or fast diffusion equations, and it provides a setting sufficiently rich to formulate precisely  \L ojasiewicz inequalities.

In a first step we indeed introduce two types of \L ojasiewicz inequalities in the probability space equipped with the Monge-Kantorovich distance. One of these inequalities  is a growth measure of the energy functional with respect to the Monge-Kantorovich distance to stationary points, while the other  provides a relationship between  the values of the energy and its slope.  The latter is called the {\em gradient \L ojasiewicz inequality}. In this functional setting, both inequalities can be viewed  as families of abstract functional inequalities. 	
We prove their equivalence in the case of convex functionals.   Specialising our  results to Boltzmann's entropy we recover Otto-Villani's theorem~\cite{ottovil} stating the  equivalence between the logarithmic 
Sobolev and Talagrand inequalities. We also prove a new equivalence between Gagliardo-Sobolev inequality and a nonlinear Talagrand type inequality~\eqref{CKOI}.  For general $\lambda$-convex functionals the gradient \L ojasiewicz gradient inequality merely implies the growth around the minimisers set, the reverse implication is in general false. This difficulty can be \chge{felt} through the fact that Talagrand's inequality is not known to imply logarithmic Sobolev inequality for \chge{a general non-convex\footnote{\chge{However it holds for potential whose Hessian is bounded from below, see \cite[Corollary 3.1]{ottovil}}} confinement potential}, see \cite[Section~9.3.1, p.292]{villani03}. \chge{In Section~\ref{s:ineq} we discuss further this issue and provide a new proof that the reverse implication holds for convex potentials with $L^\infty$ variations},~\cite[Corollary~2.2]{ottovil}.

In Theorem \ref{t:conv}, we use \L ojasiewicz inequalities in a classical way to analyse the convergence and the rate of evolution equations. Our results do not subsume the uniqueness of a minimiser/critical point, neither the convexity of the potential. We recover many classical results in a unified way and we provide new insights into the stabilisation phenomena  in the presence of a continuum of equilibria.

\bigskip

The structure of the paper is as follows. In Section~\ref{s:loja}, we state  two fundamental types of \L ojasiewicz inequalities and study their equivalence. General convergence 
rate results for subgradient systems  are also provided.  In Section~\ref{s:ineq} we translate these inequalities into functional 
inequalities in the case of the Boltzmann and non-linear entropies. 

\smallskip

\noindent
Notations, definitions and classical results on Monge-Kantorovich metric and optimal transportation are postponed to  Section~\ref{s:app}.
}
\section{\L ojasiewicz inequalities for displacement convex functions}\label{s:loja}

\subsection{Main results}\chg{
\noindent Let $\PP_2(\R^d)$ be the  set of probability measures in $\R^d$ with bounded se\-cond moments.   In the sequel $\J:\PP_2(\R^d) \to (-\infty,+\infty]$  is a lower semi-continuous function which is $\lambda$-convex along generalised geodesics with $\lambda$ in $\R$. 
\bigskip

}
\noindent
{\bf Two  inequalities of \L ojasiewicz type.} 
Assume that $\J$ has at least a minimiser. Set $\hat \J=\min\{\J[\rho]:\rho\in\P_2(\R^d)\}$. Fix $r_0\in (\hat \J,+\infty]$ and $\theta \in (0,1]$.  We consider the two following properties:

\begin{property}[\L ojasiewicz gradient property]
 There exists   $\cg>0$  such that for all $\rho \in \PP_2(\R^d)$, 
  \begin{equation}\label{eq:lg}
   \hat \J< \J[\rho]< r_0 \quad  \Rightarrow  \quad  \forall \nu \in \partial \J[\rho]\;,\; \cg\left(\J[\rho]-\hat \J\right)^{1-\theta} \le \|\nu\|_{\rho},
  \end{equation}
\chg{where $\partial \J$ stands for the subdifferential of $\J$ in the probability space equipped with the Monge-Kantorovich metric and $\|\cdot\|_{\rho}$ is the norm in $L^2_\rho(\R^d)$, see Section~\ref{s:app}.} 
\end{property}
\begin{property}[Functional \L ojasiewicz  inequality]
  There exists $\cf>0$  such that for all $\rho \in \PP_2(\R^d)$,  
  \begin{equation}\label{eq:lf}
 \hat \J< \J[\rho]< r_0 \quad \Rightarrow \quad  \cf \, \M_2(\rho,\argmin \J)^{{1}/{\theta}} \leq \,\J[\rho]- \hat \J.
  \end{equation}
\end{property}

\medskip 
\begin{remark}\label{r:loja}\chg{
(a) A formal geometric interpretation of \eqref{eq:lg} is as follows: assume $\J$ sufficiently smooth to rewrite the inequality \eqref{eq:lg} as
$$\|\nabla \left(\J-\hat \J\right)^\theta[\rho]\|_\rho\geq \theta\cg,$$ 
for any $\rho$ such that  $\hat \J< \J[\rho]< r_0$.  \chge{With this reformulation, we see} that the norm of the gradient of $(\J-\hat \J)^{\theta}$ is  bounded away from $0$ for non-critical measures. The obtained reparameterization $(\J-\hat \J)^{\theta}$ of $\J$ is called {\em sharp} \chge{in reference to the sharpness of its profile near its critical set}. It provides   a Lyapunov function for the corresponding gradient system with strong decrease rate properties \chge{reflecting the $V$-shape of the rescaled energy} (see the proof of  Theorem~\ref{thm:main}).\\
 The second inequality \eqref{eq:lf} is a classical growth inequality around stationary points.}\\
(b) Original inequalities for analytic and subanalytic functions can be found in the IHES lectures~\cite{loja65} by \L ojasiewicz. Several generalisations of gradient inequalities followed, see in particular~\cite{simon83,kurd,BDLS}.\\
(c) In this infinite dimensional setting, the above \L ojasiewicz inequalities should be thought as families of functional inequalities. Formal connections with the Talagrand, logarithmic  Sobolev and Gagliardo-Nirenberg inequalities are provided in Section~\ref{s:ineq}.\\
\chg{(d) When $r_0=+\infty$ the above inequalities are {\em global inequalities} which will be the case of most examples provided in Section~\ref{s:ineq}.\\
(e) Note that when $\J$ is not convex, Property~\eqref{eq:lg} implies that there are no critical values between $\hat \J$ and $r_0$, while~\eqref{eq:lf} does not preclude this possibility.}
\end{remark}
\noindent



\medskip

We now state our two main theorems.
\chg{\begin{theorem}[Equivalence between \L ojasiewicz inequalities in $\PP_2(\R^d)$]\label{thm:main}
  Let $\J$ be a proper lower semi-continuous functional which has at least a global minimiser.  
  \begin{itemize} 
  \item[(i)] If $\J$ is $\lambda$-convex along generalised geodesics for some $\lambda$ in $\R$, then the \L ojasiewicz gradient property~\eqref{eq:lg} implies the \L ojasiewicz functional property~\eqref{eq:lf}.
   \item[(ii)] If $\J$ is  convex along generalised geodesics, then the \L ojasiewicz gradient property~\eqref{eq:lg} and the \L ojasiewicz functional property~\eqref{eq:lf} are e\-qui\-va\-lent.
   \end{itemize}
\end{theorem}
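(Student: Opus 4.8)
For part~(i) the plan is to feed the gradient \L ojasiewicz inequality~\eqref{eq:lg} into the gradient flow of $\J$ and to read off a sharp length estimate. Fix $\rho\in\Pd$ with $\hat\J<\J[\rho]<r_0$ and let $(\rho_t)_{t\ge0}$ be the gradient flow of $\J$ issued from $\rho$: since $\J$ is $\lambda$-convex along generalised geodesics, this curve exists and is unique, $t\mapsto\J[\rho_t]$ is non-increasing and locally absolutely continuous, the metric derivative satisfies $|\rho_t'|=|\partial\J|(\rho_t)=\|\partial^\circ\J[\rho_t]\|_\rho$ for a.e.\ $t$ (here $\partial^\circ\J[\rho_t]$ is the minimal-norm element of the subdifferential and $|\partial\J|$ the metric slope), and the energy dissipation identity $-\frac{{\rm d}}{{\rm d}t}\J[\rho_t]=|\rho_t'|^2$ holds; all of this is recalled in Section~\ref{s:app}. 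Because $\J$ decreases and $\J[\rho]<r_0$, the curve remains in $\{\hat\J<\J<r_0\}$ on the interval $[0,T^*)$ with $T^*:=\sup\{t:\J[\rho_s]>\hat\J\text{ for all }s<t\}$; there I would apply~\eqref{eq:lg} to $\partial^\circ\J[\rho_s]$ and use the chain rule for $s\mapsto(\J[\rho_s]-\hat\J)^\theta$ to obtain, for a.e.\ $s<T^*$,
$$-\frac{{\rm d}}{{\rm d}s}\big(\J[\rho_s]-\hat\J\big)^\theta=\theta\,(\J[\rho_s]-\hat\J)^{\theta-1}|\rho_s'|^2\ \ge\ \theta\,\cg\,|\rho_s'|,$$
and then integrate to get $\int_0^t|\rho_s'|\dd s\le(\theta\cg)^{-1}(\J[\rho]-\hat\J)^\theta$ for every $t<T^*$.

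Next I would pass to the limit $t\uparrow T^*$. If $T^*<+\infty$, continuity of $t\mapsto\J[\rho_t]$ forces $\J[\rho_{T^*}]=\hat\J$, so $\rho_{T^*}\in\argmin\J$ and $\M_2(\rho,\rho_{T^*})\le(\theta\cg)^{-1}(\J[\rho]-\hat\J)^\theta$. If $T^*=+\infty$, the curve has finite length, hence is Cauchy and converges in the complete space $(\Pd,\M_2)$ to some $\rho_\infty$ with $\M_2(\rho,\rho_\infty)\le(\theta\cg)^{-1}(\J[\rho]-\hat\J)^\theta$; moreover $\J[\rho_t]$ cannot decrease to a limit $\ell>\hat\J$, for otherwise the slope would stay $\ge\cg(\ell-\hat\J)^{1-\theta}>0$ along the flow and the energy dissipation identity would force $\J[\rho_t]\to-\infty$, so $\J[\rho_t]\downarrow\hat\J$ and lower semicontinuity gives $\J[\rho_\infty]=\hat\J$, i.e.\ $\rho_\infty\in\argmin\J$. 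In both cases $\M_2(\rho,\argmin\J)\le(\theta\cg)^{-1}(\J[\rho]-\hat\J)^\theta$, which is exactly~\eqref{eq:lf} with $\cf=(\theta\cg)^{1/\theta}$.

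For part~(ii), the implication~\eqref{eq:lg}$\Rightarrow$\eqref{eq:lf} is part~(i); only the converse uses genuine (not merely $\lambda$-) convexity. Assuming~\eqref{eq:lf}, fix $\rho$ with $\hat\J<\J[\rho]<r_0$ and $\nu\in\partial\J[\rho]$, and for $\eps>0$ pick $\bar\rho\in\argmin\J$ with $\M_2(\rho,\bar\rho)\le\M_2(\rho,\argmin\J)+\eps$ (an $\eps$-approximation avoids having to assume a nearest minimiser exists). I would then test the subgradient inequality for $\nu$ along the generalised geodesic based at $\rho$ joining $\rho$ to $\bar\rho$ and use Cauchy--Schwarz in $L^2_\rho$:
$$\hat\J=\J[\bar\rho]\ \ge\ \J[\rho]+\big\langle\nu,\,t_\rho^{\bar\rho}-\id\big\rangle_\rho\ \ge\ \J[\rho]-\|\nu\|_\rho\,\M_2(\rho,\bar\rho),$$
so $\|\nu\|_\rho\,(\M_2(\rho,\argmin\J)+\eps)\ge\J[\rho]-\hat\J$. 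Letting $\eps\downarrow0$ (and noting $\M_2(\rho,\argmin\J)>0$ since $\rho\notin\argmin\J$) and inserting the bound $\M_2(\rho,\argmin\J)\le\cf^{-\theta}(\J[\rho]-\hat\J)^\theta$ coming from~\eqref{eq:lf} yields $\|\nu\|_\rho\ge\cf^{\theta}(\J[\rho]-\hat\J)^{1-\theta}$, which is~\eqref{eq:lg} with $\cg=\cf^\theta$.

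The main obstacle is concentrated in part~(i): one must have at hand a well-posed gradient flow for a merely $\lambda$-convex functional together with its energy dissipation identity, and — more subtly — one must argue that the finite-length estimate actually lands at a point of $\argmin\J$ rather than at some other stationary measure; this last point is where the fact, encoded in~\eqref{eq:lg}, that there is no critical value strictly between $\hat\J$ and $r_0$ enters. The converse in part~(ii) is comparatively soft, the only subtlety being the possible non-existence of a metric projection onto $\argmin\J$, which the $\eps$-approximation above circumvents.
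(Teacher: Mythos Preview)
Your proposal is correct and matches the paper's proof essentially line for line: part~(i) runs the gradient flow, applies the chain rule to $(\J-\hat\J)^\theta$, extracts the finite-length bound $\int|\rho_s'|\dd s\le(\theta\cg)^{-1}(\J[\rho]-\hat\J)^\theta$, and argues $\J[\rho_t]\to\hat\J$ by contradiction, arriving at the same constant $\cf=(\theta\cg)^{1/\theta}$; part~(ii) derives the key inequality $\J[\rho]-\hat\J\le\M_2(\rho,\bar\rho)\,\|\nu\|_\rho$ and inserts~\eqref{eq:lf} to obtain $\cg=\cf^\theta$. The only cosmetic differences are that the paper packages this last inequality as a separate lemma proved via the metric slope along a constant-speed geodesic (Lemma~\ref{convineq}) rather than directly from the subdifferential definition plus Cauchy--Schwarz, and that your notation $t_\rho^{\bar\rho}$ presumes an optimal transport \emph{map}, which need not exist for general $\rho$---replace it by an optimal plan $\Psi\in\go(\rho,\bar\rho)$ as in the definition of $\partial\J$ and the argument goes through unchanged.
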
}

\begin{remark}\label{r:constant} (a) \chg{Note that the quantities $r_0,\theta$ are conserved in both cases. } Theorem \ref{thm:main}  provides a relationship between $\cf$ and $\cg$ but the optimality of the constant might be lost\footnote{See Section~\ref{sec:nrj} where optimality of the constant is preserved by $\eqref{eq:lg}\Rightarrow \eqref{eq:lf}$}. Indeed our proof gives 
$\cg=\cf^{\theta}$ when one establishes $\eqref{eq:lf}\Rightarrow \eqref{eq:lg}$, while $\cf=(\theta\cg)^{{1}/{\theta}}$ when $\eqref{eq:lg}\Rightarrow \eqref{eq:lf}$ is proved. \\
Yet, when $\theta=1$ and $\J$ is convex the equivalence between~\eqref{eq:lg} and~\eqref{eq:lf} holds with  
$\cf=\cg$.\\
(b) Assume for simplicity that $\hat \J=0$. The extension of the gradient \L ojasiewicz inequality by Kurdyka \cite{kurd} would write in our setting:
 \begin{equation}\label{kurd}
    0<\J[\rho]< r_0 \quad  \Rightarrow  \quad  \forall \nu \in \partial \J[\rho]\;,\; 1 \le \varphi'(\J[\rho])\|\nu\|_{\rho},\;
  \end{equation}
  for some $\varphi \in C^0[0,r_0)\cap C^1(0,r_0)$ \chg{such that} $\varphi(0)=0$ and $\varphi'>0$ on $(0,r_0)$. \chg{Note that \eqref{eq:lg} is nothing but inequality \eqref{kurd}} with $\varphi(s)=s^\theta/({\theta \cg})$.  Mimicking the proof of Theorem~\ref{thm:main} one can establish that~\ref{kurd}  implies:
\begin{equation}\label{err}
0< \J[\rho]< r_0 \quad \Rightarrow \quad  \varphi^{-1}(\M_2(\rho,\argmin \J))\leq \,\J[\rho].
  \end{equation}
Yet  \eqref{kurd} and \eqref{err} are not in general equivalent. One can build a $C^2$ convex coercive function in $\R^2$ which satisfies \eqref{err} but not \eqref{kurd}, \chg{whatever the choice of $\varphi$, see \cite{bolte15}}. The construction is fairly complex, involves highly oscillatory behaviour of level sets and originates in \cite{bolte10}.  This limitation helps to understand the discrepancy between constants mentioned in item (a) of this remark.\\
\chg{(c) An Hilbertian version of the above result was provided in \cite{bolte15} in view of studying the complexity of first-order methods.}
\end{remark}

Our second result concerns the flow of $-\partial \J$ and is simply a Monge-Kantorovich version of the classical Hilbertian  results. Recall that $\J$ is a  lower semi-continuous functional $\J:\PP_2(\R^d)\to (-\infty,+\infty]$ which is \chg{ $\lambda$-convex along generalised geodesics}. For  $\rho_0\in \dom \J$, we consider absolutely continuous solutions $\rho:[0,+\infty)\to \PP_2(\R^d)$ to the subgradient dynamics:
\begin{equation}\label{subgrad}
\frac{\!\dd }{\!\dd t} \rho(t)+\partial \J[\rho(t)]\ni0, \text{ for almost every $t$ in }(0,+\infty),
\end{equation}
with initial condition $\rho(0)=\rho_0$. Such a solution exists and is unique see 
\cite[Theorem 11.3.2, p.305]{ags}  and \cite[Theorem 11.1.4, p.285]{ags}. Moreover by the energy identity \cite[Theorem 11.3.2, p.305]{ags}, one has:
\begin{align}\label{dec}
 & \J[\rho(\cdot)] \text{ is non-increasing, absolutely continuous,}  \\
 \label{prop}
 & \chg{\frac{\!\dd}{\!\dd t} \J[\rho(t)]=-\left |\frac{\!\dd \rho }{\!\dd t} \right |^2 \!(t) \text{ a.e. on }(0,+\infty).}   
\end{align}

\medskip

As recalled in the introduction it is well known that functional inequalities are key tools for the asymptotic study of dissipative systems of gradient type. If  \L ojasiewicz inequalities are seen as  families of functional inequalities, the theorem below could  be considered as an abstract principle to deduce the convergence of a gradient flow from a given functional inequality\footnote{See  Section~\ref{s:ineq} in which some classical functional inequalities are interpreted as \L ojasiewicz inequalities}.
  
\begin{theorem}[Global convergence rates and functional inequalities]\label{t:conv} \chg{Consider $\J$ a proper lower semi-continuous  $\lambda$-convex functional. Assume $\J$  has at least a minimiser and satisfies the gradient inequality \eqref{eq:lg}.  Let $\rho_0\in [\hat \J\leq \J<r_0]$, and consider a trajectory 
of~\eqref{subgrad} starting at $\rho_0$.
}
Then this trajectory has a finite length and 
converges for the Monge-Kantorovich metric to a minimiser $\rho_{\infty}$ of $\J$.

Moreover the following estimations hold:
\begin{enumerate}
\item[(i)]  If  $\theta\in (0,1/2),$ then for all $t\geq0$,
\begin{align*}
\J[\rho(t)]-\hat \J\leq \left\{ (\J[\rho_0]-\hat \J)^{2\theta-1}+\cg^2(1-2\theta)t  \right\}^{-\frac{1}{1-2\theta}},\\
\M_2(\rho(t),\rho_\infty)\leq \frac{1}{\cg \theta} \left\{  (\J[\rho_0]-\hat \J)^{2\theta-1}+\cg^2(1-2\theta)t  \right\}^{-\frac{\theta}{1-2\theta}}.
\end{align*}

\item[(ii)] If  $\theta={1}/{2},$ then for all $t\geq0$,
\begin{align*}
\J[\rho(t)]-\hat \J\leq \chge{\left(\J[\rho_0]-\hat\J\right)}\exp[-\cg^2t],\\\
\M_2(\rho(t),\rho_\infty)\leq \frac{2}{\cg}\chge{ \sqrt{\J[\rho_0]-\hat\J}}\,\exp{\Big[-\frac{\cg^2t}{2}\Big]}.
\end{align*}
\item[(iii)]  If  $\theta\in ({1}/{2},1]$ we observe a finite time stabilisation: The final time is smaller than
$$T=\frac{(\J[\rho_0]-\hat \J)^{2\theta-1}}{\cg^2(2\theta-1)}.$$
When $t$ is in $[0,T]$
\begin{align*}
\J[\rho(t)]-\hat \J\leq \left\{(\J[\rho_0]-\chg{\hat \J})^{2\theta-1}-\cg^2(2\theta-1)t  \right\}^{\frac{1}{2\theta-1}},\\\
\M_2(\rho(t),\rho_\infty)\leq \frac{1}{\cg\theta} \left\{ (\J[\rho_0]-\chg{\hat \J})^{2\theta-1}-\cg^2(2\theta-1)t  \right\}^{\frac{\theta}{2\theta-1}}.
\end{align*}
\end{enumerate}

\end{theorem}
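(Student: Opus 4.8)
The plan is to mimic the classical Hilbertian \L ojasiewicz argument, transported to the metric-gradient-flow setting of $(\PP_2(\R^d),\M_2)$ via the energy identity \eqref{dec}–\eqref{prop}. The starting point is to exploit that along the curve $\rho(t)$ the functional $t\mapsto \J[\rho(t)]$ is non-increasing and absolutely continuous, with $\frac{\dd}{\dd t}\J[\rho(t)] = -|\rho'|^2(t)$ a.e.; moreover the metric slope $|\partial\J|(\rho(t))$ agrees a.e.\ with $|\rho'|(t)$ and with $\min\{\|\nu\|_{\rho(t)}:\nu\in\partial\J[\rho(t)]\}$ along the flow (this is part of the $\lambda$-convex theory, \cite[Ch.~11]{ags}). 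Set $E(t)=\J[\rho(t)]-\hat\J\geq 0$. As long as $E(t)>0$, combining the energy identity with the gradient inequality \eqref{eq:lg} — applied to the minimal-norm subgradient — gives $-\frac{\dd}{\dd t}E(t) = |\rho'|^2(t) \geq \cg^2 E(t)^{2(1-\theta)}$. This scalar differential inequality is then integrated in the three regimes $\theta\in(0,1/2)$, $\theta=1/2$, $\theta\in(1/2,1]$, producing exactly the stated bounds on $\J[\rho(t)]-\hat\J$ (and, in case (iii), the finite extinction time $T$, after which one checks $E\equiv 0$).

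The length estimate is the heart of the matter and is obtained through the sharp Lyapunov function $\phi(t)=\tfrac{1}{\cg\theta}E(t)^{\theta}$ (the reparametrisation $(\J-\hat\J)^{\theta}$ of Remark \ref{r:loja}(a)). Using $\phi'(t) = \tfrac{1}{\cg}E(t)^{\theta-1}E'(t) = -\tfrac{1}{\cg}E(t)^{\theta-1}|\rho'|^2(t)$ together with $|\partial\J|(\rho(t)) = |\rho'|(t) \geq \cg E(t)^{1-\theta}$, one gets $-\phi'(t) \geq \tfrac{1}{\cg}E(t)^{\theta-1}|\rho'|(t)\cdot \cg E(t)^{1-\theta} = |\rho'|(t)$ for a.e.\ $t$. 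Integrating over $[0,\infty)$ (or $[0,T]$) yields $\int_0^{\infty}|\rho'|(t)\dd t \leq \phi(0) = \tfrac{1}{\cg\theta}(\J[\rho_0]-\hat\J)^{\theta} < +\infty$, so the trajectory has finite length; by completeness of $(\PP_2(\R^d),\M_2)$ it converges to some $\rho_\infty$, which is a minimiser because $\J[\rho(t)]\downarrow \hat\J$ and $\J$ is lower semi-continuous. The quantitative distance bounds follow from $\M_2(\rho(t),\rho_\infty) \leq \int_t^{\infty}|\rho'|(s)\dd s \leq \phi(t) = \tfrac{1}{\cg\theta}E(t)^{\theta}$ — note this is where the prefactor $\tfrac{1}{\cg\theta}$ in (i) and (iii) comes from — combined with the already-derived decay of $E(t)$; in the case $\theta=1/2$ a small refinement of the integration gives the factor $2/\cg$ and the halved exponential rate.

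I expect the main obstacle to be the rigorous justification that one may work with the differential inequality $-\tfrac{\dd}{\dd t}E(t)\geq \cg^2 E(t)^{2(1-\theta)}$ and with $-\phi'(t)\geq |\rho'|(t)$ only \emph{almost everywhere} and yet integrate them, since $E$ and $\phi$ need only be absolutely continuous; here one invokes that the chain rule $\phi'=\tfrac1\cg E^{\theta-1}E'$ holds a.e.\ for absolutely continuous $E$ (care needed near times where $E(t)=0$, handled by restricting to the open set $\{E>0\}$, which in cases (i)–(ii) is all of $[0,\infty)$ and in case (iii) is $[0,T)$), and that \eqref{prop} together with the identification of $|\rho'|$ with the minimal subgradient norm lets us feed \eqref{eq:lg} into the estimate. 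A secondary technical point is ensuring that $\rho(t)$ stays in the sublevel region $[\hat\J\leq\J<r_0]$ for all $t$, which is immediate from the monotonicity \eqref{dec}. Everything else is routine ODE integration and the triangle-inequality bound $\M_2(\rho(t),\rho_\infty)\leq \text{length of }\rho|_{[t,\infty)}$.
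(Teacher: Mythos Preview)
Your proposal is correct and follows essentially the same route as the paper: the paper's proof of Theorem~\ref{t:conv} leans on the estimates already established in the proof of Theorem~\ref{thm:main}(i), namely $-\frac{\dd}{\dd t}\big(\J[\rho(t)]^{\theta}\big)\geq \cg\theta\,|\rho'|(t)$ (your $-\phi'\geq|\rho'|$) for the finite-length/convergence part, and then applies \eqref{eq:lg} once more to get the scalar inequality $-\dot z\geq \cg^2\theta\, z^{1/\theta-1}$ for $z=\J[\rho(t)]^{\theta}$ (equivalent to your $-E'\geq \cg^2 E^{2(1-\theta)}$), which is integrated case by case. Two minor remarks: the set $\{E>0\}$ need not be all of $[0,\infty)$ in cases (i)--(ii) (the flow could in principle hit $\argmin\J$ in finite time), but this is harmless since the estimates are trivial once $E=0$; and no ``refinement'' is needed for $\theta=1/2$, since $\tfrac{1}{\cg\theta}=\tfrac{2}{\cg}$ and $E(t)^{1/2}\leq E(0)^{1/2}e^{-\cg^2 t/2}$ follow directly.
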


\begin{remark}\chg{
(a) A  fundamental feature of this convergence result is that {\em it does not subsume the uniqueness of a minimiser}  which is uncommon in the domain. \\
(b) It can be proved that  the generalised \L ojasiewicz gradient inequality of Remark~\ref{r:constant}~(b), often called Kurdyka-\L ojasiewicz property, allows as well to study the convergence of gradient system~\eqref{subgrad}. \\
(c) By Theorem \ref{thm:main}, when $\J$ is convex it is sufficient to assume \eqref{eq:lf} instead of \eqref{eq:lg}. This fact is quite important in practice since  \eqref{eq:lf} is   a ``zero-order" functional inequality\footnote{It only involves the value of the function $\J$ and no higher order information} and is in general easier to derive than \eqref{eq:lg}.\\
} \end{remark}



\subsection{Proofs of the main results }
\begin{lemma}[Slope and convexity]\label{convineq}

Let  $\J$ be a proper lower semi-continuous convex functional and \chg{$\mu \in \dom \J$}, $\chg{\nu} \in \dom \partial \J$,   two distinct probabilities in $\R^d$. Then
\begin{equation}
\frac{\J[\nu]-\J[\mu]}{\M_2(\mu,\nu)}\leq \|\partial^0\J[\nu]\|_{\nu}.
\end{equation}
\end{lemma}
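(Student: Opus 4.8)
The plan is to exploit convexity along a geodesic (or generalized geodesic) joining $\nu$ to $\mu$, differentiate the energy along that curve at its endpoint $\nu$, and recognize the resulting derivative as a pairing against the minimal-norm subgradient $\partial^0\J[\nu]$. Concretely, let $\rho_t$ for $t\in[0,1]$ be a constant-speed geodesic in $\PP_2(\R^d)$ with $\rho_0=\nu$ and $\rho_1=\mu$, so that $\M_2(\rho_0,\rho_t)=t\,\M_2(\mu,\nu)$ and, by convexity, $\J[\rho_t]\le (1-t)\J[\nu]+t\J[\mu]$. Rearranging gives
\begin{equation*}
\frac{\J[\rho_t]-\J[\nu]}{t}\le \J[\mu]-\J[\nu]
\end{equation*}
for every $t\in(0,1]$.

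Next I would take the liminf as $t\downarrow 0$ on the left-hand side. The metric slope (descending slope) of $\J$ at $\nu$ is $|\partial \J|(\nu)=\limsup_{\sigma\to\nu}\frac{(\J[\nu]-\J[\sigma])_+}{\M_2(\nu,\sigma)}$, and for $\lambda$-convex (in particular convex) functionals the standard theory (Ambrosio–Gigli–Savar\'e) identifies this slope with $\|\partial^0\J[\nu]\|_\nu$, the $L^2_\nu$-norm of the element of minimal norm in $\partial\J[\nu]$; moreover the incremental ratios along the geodesic converge, as $t\downarrow 0$, to minus this slope times the distance, i.e.
\begin{equation*}
\lim_{t\downarrow 0}\frac{\J[\rho_t]-\J[\nu]}{t}\ge -\,\|\partial^0\J[\nu]\|_\nu\,\M_2(\mu,\nu).
\end{equation*}
Combining this with the inequality from the previous paragraph yields
\begin{equation*}
-\,\|\partial^0\J[\nu]\|_\nu\,\M_2(\mu,\nu)\le \J[\mu]-\J[\nu],
\end{equation*}
which, after dividing by $\M_2(\mu,\nu)>0$ (the two measures are distinct) and rearranging, is exactly the claimed bound
\begin{equation*}
\frac{\J[\nu]-\J[\mu]}{\M_2(\mu,\nu)}\le \|\partial^0\J[\nu]\|_\nu .
\end{equation*}

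The main obstacle is the rigorous justification of the limit of the incremental ratios and the identification of the slope with $\|\partial^0\J[\nu]\|_\nu$: this requires that $\nu\in\dom\partial\J$ (so the slope is finite and attained) and the use of the metric-space differential calculus for geodesically convex functionals, together with the chain-rule/subdifferential characterization in $\PP_2(\R^d)$; I would cite the relevant statements from \cite{ags} rather than reprove them. A minor point to handle carefully is whether one works with genuine geodesics or generalized geodesics: since the hypothesis only gives convexity along generalized geodesics in the ambient results, I would instead pick a generalized geodesic based at $\nu$ interpolating toward $\mu$, note that its length is still $\M_2(\mu,\nu)$ and that the same convexity inequality and slope estimate apply, so the argument goes through unchanged.
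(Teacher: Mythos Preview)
Your proposal is correct and follows essentially the same route as the paper: both take a constant-speed geodesic between $\mu$ and $\nu$, use convexity of $t\mapsto\J[\rho_t]$ to bound the global secant slope by the one-sided derivative at the endpoint $\nu$, and then bound that derivative by the metric slope $|\nabla|\J[\nu]\le\|\partial^0\J[\nu]\|_\nu$ via the inequality~\eqref{pente}. The only cosmetic differences are your parametrisation on $[0,1]$ starting at $\nu$ (the paper uses $[0,\alpha]$ with $\alpha=\M_2(\mu,\nu)$ starting at $\mu$) and your claim that the slope \emph{equals} $\|\partial^0\J[\nu]\|_\nu$---only the inequality $|\nabla|\J[\nu]\le\|\partial^0\J[\nu]\|_\nu$ is needed and is what the paper invokes.
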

\begin{proof} Set $\alpha=\M_2(\mu,\nu)$. Let $[0,\alpha]\ni t\to \rho_t:=\mu_{t/\alpha}$ where $\mu_t$ is \chg{a constant speed geodesic between $\mu$ and $\nu$ (indeed $\P_2(\RR^d)$ is a geodesic space \cite[Theorem 2.10, p.35]{users})}.  Since $t\mapsto \J[\rho_t]$ is convex, we have 
$$\frac{\J[\rho_\alpha]-\J[\rho_0]}{\alpha}\leq \limsup_{\tau\to \alpha}\frac{\J[\rho_{\alpha}]-\J[\rho_{\tau}]}{\alpha-\tau}.$$
\chg{As $t\mapsto\mu_{t}$ is a constant speed geodesic,} $\M_2(\rho_t,\rho_{\tau})=|{t}/{\alpha}-{\tau}/{\alpha}|\M_2(\mu,\nu)=|t-\tau|$ for all time $(t,\tau)\in[0,\alpha]^2$, the above can be rewritten as \chg{
$$\frac{\J[\nu]-\J[\mu]}{\M_2(\mu,\nu)}\leq \limsup_{\tau\to \alpha}\frac{\J[\nu]-\J[\rho_{\tau}]}{\M_2(\nu,\rho_{\tau})}\leq |\nabla| \J[\nu]\leq\|\partial^0\J[\nu]\|_{\nu},$$
where the last inequality follows from \eqref{pente}.}\end{proof}

$\:$\\
\noindent
{\bf Proof of Theorem~\ref{thm:main}.} With no loss of generality, we assume that $\hat \J=\min \J=0$.
\paragraph{$\bullet$ Proof of (i)} We thus prove  \eqref{eq:lg} $\Rightarrow$ \eqref{eq:lf} \chg{without using convexity}. 
Take $\rho_0$ with  $\J[\rho_0]\in (0,r_0)$ and consider the dynamics
\begin{equation}\label{eq:dynl}
\frac{\!\dd }{\!\dd t} \rho(t) + \partial \J[\rho(t)] \ni 0\; \text{ with }\rho(0)=\rho_0.
\end{equation}
Set $\bar t=\sup\{t:\J[\rho(\tau)]>0,\forall \tau \in [0,t)\}$. If $\bar t<+\infty$, the continuity of $\J[\rho(\cdot)]$ ensures that $\J[\rho(\bar t)]=0$. By \eqref{dec}, one has $\J[\rho(t)]=0$ for all $t \geq \bar t$. \chge{By integrating \eqref{prop} over $(\bar t,\tau)$, with $\tau\geq \bar t$, one obtains}
\begin{equation}\label{stab}
 \J[\rho(\bar t)]-\J[\rho(\tau)]=\int _{\bar t}^\tau \left |\frac{\!\dd \rho}{\!\dd s} \right |^2(s) \dd s= 0.
\end{equation}
\chge{where the last equality comes from the ``lazy selection" principle \cite[Theorem 11.3.2 (ii)]{ags}. The latter indeed implies  that the velocity coincides for almost all $t\in (0,+\infty)$ with the minimum norm subgradient which is $0$ in this case.}
Whence $\rho(t)=\rho(\bar t)$ for all $t\geq \bar t$. 

We now consider the case when $t< \bar t$ so that $\J[\rho(t)]>0$. By the chain rule, we have
\begin{equation*}
  -\frac{\!\dd}{\!\dd t} \left[\J[\rho(t)]^\theta \right]=-\theta \J[\rho(t)]^{\theta -1} \frac{\!\dd}{\!\dd t}\J[\rho(t)] \text{ a.e. on $(0,\bar t)$.} 
\end{equation*}	
As $\rho$ follows the dynamics~\eqref{eq:dynl}, we have by \eqref{prop}
\begin{equation*}
  -\frac{\!\dd}{\!\dd t} \left[\J[\rho(t)]^\theta \right]=\theta \J[\rho(t)]^{\theta -1}\left | \frac{\!\dd \rho }{\!\dd t} \right |^2(t)\;\text{a.e. on $(0,\bar t)$}.
  \end{equation*}
Using \L ojasiewicz gradient property \eqref{eq:lg}, we obtain
\begin{equation*}
  -\frac{\!\dd}{\!\dd t} \left[\J[\rho(t)]^\theta \right]\ge \cg \theta  \left | \frac{\!\dd \rho}{\!\dd t} \right |^2(t)\left\| \nu(t)\right\|_{\rho}^{-1},\;
\end{equation*}
for any $\nu(t)\in \partial \J[\rho(t)]$. \chg{From \cite[Theorem 8.3.1, p.183]{ags}, we have for almost every $t$: 
$$\left | \frac{\!\dd \rho }{\!\dd t} \right |(t)=\left\| \frac{\!\dd }{\!\dd t} \rho(t)\right\|_{\rho}.$$}
Using the gradient dynamics one may take $\nu(t)=\chg{-}\dd\rho / \dd t$ for almost all $t$, to obtain
\begin{equation}\label{estim}
  -\frac{\!\dd}{\!\dd t} \left[\J[\rho(t)]^\theta \right] \ge \cg \theta \left |\frac{\!\dd \rho}{\!\dd t} \right|(t)\;.
\end{equation}
Integrating between $0$ and $t\in [0,\bar t)$ and using the absolute continuity of $\J[\rho(\cdot)]$, we obtain
\begin{equation*}
 0 \le \int_0^t \left |\frac{\!\dd\rho}{\!\dd s}  \right |(s) \dd s\le\frac{1}{\cg \theta} \left( \J^{\theta}[\rho_0]-\J^{\theta}[\rho(t)] \right)\;.
\end{equation*}
As a consequence of \eqref{stab} this yields
\begin{equation}\label{macha}
  \int_0^\infty \left |\frac{\!\dd \rho}{\!\dd s}\right |(\chg{s})\dd s \le\frac{1}{\cg \theta} \J^{\theta}[\rho_0],
\end{equation}
which implies in particular that $\left |{\!\dd } \rho/ {\!\dd s}\right | (s)$ is bounded in $L^ 1(0,\infty)$.

\begin{claim}\label{conv} 
  If an absolutely continuous  curve $\R_+\ni t\to \mu(t)\in \PP_2(\R^d)$ satisfies
  \begin{equation*}
     \int_0^\infty \left |\frac{\!\dd \mu}{\!\dd t} \right | (t)\dd t < \infty
  \end{equation*}
then $\mu$ converges  to some $\bar \mu$ as $t\to\infty$ in the  sense of the Monge-Kantorovich metric.
\end{claim}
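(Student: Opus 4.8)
The plan is to show that the curve $\mu$ is a Cauchy sequence with respect to the Monge-Kantorovich metric and then invoke completeness of $(\PP_2(\R^d),\M_2)$. The key observation is that the length functional controls the distance: for an absolutely continuous curve, $\M_2(\mu(s),\mu(t))\le \int_s^t |\mu'|(\sigma)\dd\sigma$ for all $s\le t$. This is a standard fact about length spaces (the metric derivative bounds the arc length, which in turn bounds the distance between endpoints); it is recorded, e.g., in \cite[Theorem 1.1.2]{ags}.

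First I would introduce the function $L(t):=\int_0^t |\mu'|(\sigma)\dd\sigma$. By hypothesis the monotone nondecreasing function $L$ is bounded, hence it has a finite limit $L(\infty)<\infty$ as $t\to\infty$, and therefore $L$ satisfies the Cauchy criterion: for every $\eps>0$ there is $t_\eps$ such that $L(t)-L(s)<\eps$ whenever $t\ge s\ge t_\eps$. Combining this with the inequality $\M_2(\mu(s),\mu(t))\le L(t)-L(s)$ shows that $\bigl(\mu(t)\bigr)_{t\ge 0}$ is Cauchy in $(\PP_2(\R^d),\M_2)$.

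Finally, since $(\PP_2(\R^d),\M_2)$ is a complete metric space \cite[Proposition 7.1.5]{ags}, the Cauchy curve converges: there exists $\bar\mu\in\PP_2(\R^d)$ with $\M_2(\mu(t),\bar\mu)\to 0$ as $t\to\infty$. This proves the claim. I do not anticipate any serious obstacle here: the only point requiring a citation rather than a one-line argument is the arc-length estimate $\M_2(\mu(s),\mu(t))\le\int_s^t|\mu'|$, which is precisely the defining property linking the metric derivative to the length of an absolutely continuous curve in a metric space, and the completeness of the Wasserstein space, both of which are available from \cite{ags} and are recalled in Section~\ref{s:app}.
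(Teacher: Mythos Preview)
Your proposal is correct and follows essentially the same route as the paper: you use the arc-length inequality $\M_2(\mu(s),\mu(t))\le\int_s^t|\mu'|$ to show the curve is Cauchy, then invoke completeness of $(\PP_2(\R^d),\M_2)$ via \cite[Proposition~7.1.5]{ags}. The only difference is that you introduce the auxiliary function $L(t)$ explicitly, which is a harmless elaboration of the same argument.
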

\noindent
{\em Proof of Claim~\ref{conv}.}
\chg{Simply observe that the absolute continuity and the definition of the metric derivative implies that
\begin{eqnarray}
\M_2(\mu_t,\mu_s)& \leq &\int_t^s \left |\frac{\!\dd \mu}{\!\dd \tau}\right |(\tau)\dd\tau, \: \forall s\geq t, \label{fco}
\end{eqnarray}}
and thus $\mu$ is a Cauchy curve for the Monge-Kantorovich distance. Since $\R^d$ is complete so is $\PP_2(\R^d)$ (see \cite[Proposition 7.1.5, p.154]{ags}) 
and $t \mapsto \mu(t)$ converges to some $\bar \mu$ as $t$ goes to infinity.$\hfill\Box$

\smallskip

If we did not have $\lim_{t\to+\infty} \J [\rho(t)]=0$, property \eqref{eq:lg} would imply that the subgradients along $\rho(\cdot)$ are bounded away from zero and thus there would exist a positive constant $c>0$ such that $|{\!\dd}\rho/{\!\dd t}|>c$. This would contradict the integrability property \eqref{macha}.

Thus $\lim \J[\rho(t)]=\hat \J=0$. Since $\J$ is lower semi-continuous the limit $\bar \rho$ of $\rho$ satisfies $\bar \rho\in \argmin\J$.

Combining \eqref{macha} and \eqref{fco}, we obtain
\begin{equation}\label{last}
  \M_2(\rho_0,\argmin \J)\leq \M_2(\rho_0,\bar \rho) \leq   \int_0^\infty \left |\frac{\!\dd\rho}{\!\dd t} \right|(t) \dd t  \le\frac{1}{\cg \theta} \J^{\theta}[\rho_0]
\end{equation}
which was the stated result with $\cf=(\theta\cg)^{{1}/{\theta}}$.

\paragraph{$\bullet$ Proof of (ii).} To establish \eqref{eq:lf} $\Rightarrow$ \eqref{eq:lg}, we further assume that, $\lambda=0$, {\it i.e.}, $\J$ is convex. If $\rho \in \argmin \J$ there is nothing to prove.  Assume that $\J[\rho]\in (0,r_0)$, take $\nu$ in $\partial \J[\rho]$. By Lemma~\ref{convineq}, for any $\bar \rho$  in  $\argmin \J$, we have
\begin{equation*}
    \J[\rho]\le  \M_2(\rho,\bar\rho) \left\|  \nu \right\|_{\rho},\;
  \end{equation*}
and thus
\begin{equation*}
    \J[\rho]\le  \M_2(\rho,\argmin \J) \left\|  \nu \right\|_{\rho}.
  \end{equation*}
By  \L ojasiewicz functional property~\eqref{eq:lf}, we obtain
\begin{equation*}
    \J[\rho]\le  \left(\frac{1}{\cf} \J[\rho] \right)^{\theta}\left\|  \nu \right\|_{\rho},
  \end{equation*}
or equivalently
   $\cf^{\theta} \J[\rho]^{1-\theta}\le \left\|  \nu \right\|_{\rho}.$
This is the claimed result  with $\cg=\cf^{\theta}$.

 This concludes the proof of Theorem~\ref{thm:main}. 
$\hfill\Box$


\bigskip

\noindent
Let us now proceed with the study of subgradient curves.\\

\noindent
{\bf Proof of Theorem~\ref{t:conv}} In view of the estimation of convergence rates, observe that inequality~\eqref{last} implies
\begin{equation}\label{majW}
  \M_2(\rho(t),\bar \rho) \leq   \int_t^\infty \left |\frac{\!\dd\rho}{\!\dd t} \right |(t) \dd t  \le\frac{1}{\cg \theta} \J^{\theta}[\rho(t)]
\end{equation}
and  $\bar \rho=\rho_{\infty}$.
From the above results 
$\lim_{t\to \infty} \J[\rho(t)]=\hat \J=0$ and $\rho$ converges to a \chg{minimiser $\rho_\infty$ of $\J$.  
Using \eqref{estim} and  applying  once more \eqref{eq:lg}, we obtain}
\begin{equation}\label{estim2}
  -\frac{\!\dd}{\!\dd t} \left\{\J[\rho(t)]^\theta \right\} \ge \, \cg^2 \theta \, \J[\rho(t)]^{1-\theta}.
\end{equation}
Setting $z(t)=\J[\rho(t)]^\theta$ for $t\in (0,\bar t)$, this gives the following differential inequality 
$$-\dot z(t)\geq \cg^2\theta z(t)^{1/\theta - 1 } \text{ on $[0,\bar t)$}.$$
Integrating the above inequality we obtain the desired estimates for $\J[\rho(t)]$. Those for the Monge-Kantorovich distance follow from \eqref{majW}.
\section{Applications to functional inequalities}\label{s:ineq}

\subsection{Relative internal energies}\label{sec:nrj}
Let   $V:\R^d\to \R\cup \{+\infty\}$ be a  lower semi-continuous convex potential such that 
\begin{align*}
&\mathcal{U}=\mbox{int} \,\dom V\neq \emptyset,\\
&\int_{\R^d}\exp(-V)=1.
\end{align*} This defines a log-concave probability measure $\rho^*:=\exp(-V)$. Consider a lower semi-continuous convex function $f:[0,+\infty)\to [0,+\infty)$  with $f(1)=0$ and such that  the map 
\begin{equation}\label{pd}
s\in(0,+\infty)\mapsto f(s^{-d})s^d\mbox{ is convex and non-increasing.}  
\end{equation}  Standard examples are $f(s)=s\log s$ or $f(s)=(s^m-s)/(m-1)$ with 
$m\geq 1-{1}/{d}$. The relative internal energy is defined as 

$$\displaystyle\J[\rho]:=  \left\{
\begin{aligned}
 & \int_{\R^d} f(\rho/\rhor)\dd\rhor \mbox{ if $\rho$ is absolutely continuous w.r.t $\dd\rhor$}\\
&+\infty \mbox{ otherwise.}
\end{aligned}
\right.
$$
\chg{Note that $\hat \J=0$ by Jensen's inequality.} 
It is known that $\J$ is lower semi-continuous and   convex  in  $\PP_2(\R^d)$ \cite[Theorem~9.4.12, p.224]{ags}. Let us introduce $P_f(s)=sf'(s)-f(s)$ for $s\geq0$. Denote by $L_\rho^2(\R^d)$ the space of square $\rho$-integrable functions in $\R^d$. By \cite[Theorem 10.4.9, p.265]{ags} $$\dom\partial \J=\left\{\rho\in \PP_2(\R^d):P_f(\rho)\in W^{1,1}_{\rm loc}(\mathcal{U}), \,\frac{\rho^*}{\rho}\nabla \left(P_f(\rho/\rhor)\right) \in L_\rho^2(\R^d)\right\}$$
and 
\begin{equation}
\|\partial^0 \J[\rho]\|_{\rho}^2=\int_{\R^d}  \left| \frac{\rho^*}{\rho}\nabla \left(P_f(\rho/\rhor)\right)\right|^2\!\dd\rho.
\end{equation}
In this context \L ojasiewicz gradient inequality \eqref{eq:lg} would write, for all $\rho \in \PP_2(\R^d)$:
\begin{equation*}
\J[\rho]<r_0 \Rightarrow \sqrt{\int_{\R^d}  \left| \frac{\rho^*}{\rho}\nabla \left(P_f(\rho/\rhor)\right)\right|^2\dd\rho} \geq \cg\left( \int_{\R^d} f(\rho/\rhor)\dd\rhor \right)^{1-\theta},   \end{equation*}
for some $\cg>0$, $r_0\in (0,+\infty]$ and $\theta\in (0,1]$. 
Theorem~\ref{thm:main} asserts that this inequality is equivalent to the functional \L ojasiewicz inequality \eqref{eq:lf}, for all $\rho \in \PP_2(\R^d)$:
\begin{equation}
\J[\rho]<r_0 \Rightarrow \cf\M_2(\rho,\argmin \J)^{{1}/{\theta}}\leq  \int_{\R^d} f(\rho/\rhor)\dd\rhor,
\end{equation}
for some $\cf>0$.
\smallskip

Whether such inequalities are satisfied for a general $f$ is not clear. However in the case of the Boltzmann  entropy  {\it i.e.,} $f(s)=s\log s$ much more can be said.


\subsubsection*{The logarithmic Sobolev inequality is equivalent to Talagrand inequality}
Consider the case when $f(s)=s \log s$, $V:\R^d\to \R$, a $C^2$-function with $\nabla^2V\geq K I_d$ and $K>0$. Hence for all $\rho$ in the domain of $\J$, 
$$\J[\rho]=\displaystyle \int_{\R^d}\log \left(\frac{\rho}{\rhor}\right) \dd\rho.$$
 We have 
$$\dom \partial \J= \left\{\rho\in W^{1,1}_{\rm loc}(\R^d),\, \nabla \log \left(\frac{\rho}{\rhor} \right)\in L_{\rho}^2(\R^d)\right\}.$$\chg{In this case the \L ojasiewicz gradient inequality takes the form of  the following logarithmic Sobolev inequa\-li\-ty, see e.g., \cite[Formula (9.27), p.279]{villani03}:
\begin{equation*}
   \int_{\R^d}  \left|\nabla \left[ \log \left(\frac{\rho}{\rhor}\right)\right]\right|^2\dd \rho \ge c_g^2 \left| \int_{\R^d}\log \left(\frac{\rho}{\rhor}\right) \dd \rho\right|, \forall \rho \in \dom \partial \J,
\end{equation*}
corresponding to $\theta={1}/{2}$, and $r_0=+\infty$. The optimal constant $c_g$ is given by $\sqrt{2K}$. 

On the other hand the functional \L ojasiewicz inequality  \eqref{eq:lf} is exactly a Talagrand type inequa\-li\-ty:
\begin{equation*}
 c_f \M_2(\rho, \rhor)^2 \le \int_{\R^d} \log \left(\frac{\rho}{\rhor}\right)\dd \rho, \forall \rho \in L^1(\R^d)\;\text{ with $\cf>0$}, 
\end{equation*}
where the optimal constant is $c_f=K/2$. 

Therefore Theorem~\ref{thm:main} ensures that, up to a multiplicative constant, the Talagrand and logarithmic Sobolev inequalities are equivalent 
\chge{under a convexity assumption}. This result is known and due to Otto-Villani, see \cite{ottovil}; it was obtained by completely different means\cite{\chge{Note also that our equivalence does not provide the non-convex equivalence they obtain for potentials whose Hessian are bounded from below}}.   
Remark~\ref{r:constant} shows further that  \eqref{eq:lg}, with the optimal constant $c_g=\sqrt{2K}$, implies \eqref{eq:lf} with the constant 
$$c_f=\left(\frac12 \sqrt{2K}\right)^2=\frac{K}{2},$$
which happens to be the optimal constant of the Talagrand inequality. For the reverse implication, the constant $c_g$ is half the optimal constant. }

\smallskip

The corresponding gradient system is the classical linear Fokker-Planck equation describing the evolution of the density within Ornstein-Uhlenbeck process:
\begin{equation*}
  \frac{\!\dd }{\!\dd t }\rho=\Delta \rho+\rm{div}\,(\rho \chg{\nabla V}), \; \rho(0)\in \dom \J.
\end{equation*}
Exponential  stabilisation rates are of course recovered through \chge{Theorem~\ref{t:conv}~(ii)}. 

\subsection{Sum of internal and potential energies}
\chg{
Let $F:[0,+\infty)\to \R$ be a convex differentiable function with super-linear growth satisfying $F(0)=0$, \eqref{pd} and 
$$\liminf_{s\to 0} s^{-\alpha}\,F(s)>-\infty, \mbox{ for some $\alpha>d/(d+2)$.}$$
Consider $V:\R^d\to \R$ a differentiable, $\lambda$-convex function with $\lambda\in \R$. 
Set
\begin{equation*}
\displaystyle \J[\rho]:=
\left\{\begin{aligned}
&\int_{\R^d}F(\rho)\dd x+\int_{\R^d}V\dd \rho \mbox{ if $\rho$ is absolutely continuous w.r.t the Lebesgue measure,}\\
&  +\infty \mbox{ otherwise.}
\end{aligned}\right.
\end{equation*}
The function $\J$ is lower semi-continuous,    
$\lambda$-convex  in  $\PP_2(\R^d)$ -- combine indeed \cite[Theorem~9.3.9, p.212]{ags} and \cite[Proposition~9.3.2, p.210]{ags}. Moreover  $$\dom\partial \J=\left\{\rho\in \PP_2(\R^d):P_F(\rho)\in W^{1,1}_{\rm loc}
(\R^d), \nabla F'(\rho)+ \nabla V \in L_\rho^2(\R^d)\right\},$$
see \cite[Theorem 10.4.13 p.273]{ags}.

In this context \L ojasiewicz gradient inequality \eqref{eq:lg} would write, for all $\rho$ in $ \PP_2(\R^d)$:
\begin{equation*}
 \hat\J <\J[\rho]< r_0\Rightarrow \sqrt{\int_{\R^d}  \left| \nabla F'(\rho)+ \nabla V \right|^2\dd\rho} \ge \cg\left( \int_{\R^d}F(\rho)\dd x+\int_{\R^d}V\dd \rho\right)^{1-\theta},  
\end{equation*}
for some $\cg>0, r_0\in (\hat \J,+\infty]$ and $\theta\in (0,1]$. Our main Theorem~\ref{thm:main} asserts that this inequality is equivalent to the functional \L ojasiewicz inequality \eqref{eq:lf}, for all $\rho\in \PP_2(\R^d)$:
\begin{equation*}
\hat\J <\J[\rho]< r_0\Rightarrow \cf \M_2(\rho,\argmin \J)^{{1}/{\theta}}\leq  \int_{\R^d} F(\rho)\dd x+\int_{\R^d}V\dd \rho,
\end{equation*}
for some $\cf>0$.
\smallskip

General conditions for the validity of theses inequalities are, up to our knowledge, not known at this day. Yet they hold for two specific choices of functions: $F(s)=s \log s$ and $F(s)=s^m/(m-1)$ for $m \ge 1-1/d$, corresponding respectively to the Boltzmann and non-linear entropies. \L ojasiewicz inequalities for the case involving  the Boltzmann entropy boil down to logarithmic Sobolev and Talagrand inequalities (see \chge{Section~\ref{sec:nrj}}).

The case of the ``power-entropy" is developed below. 
\subsubsection*{The Gagliardo-Nirenberg inequality is equivalent to a non-linear Talagrand type inequality} 
We assume  further $V$ is a $C^2$-function with $\nabla^2V\geq KI_d$ and  $K>0$.
 Let $d\ge 1$ and consider  $F(s)=s^m/(m-1)$, with $m \ge 1-1/d$. The unique minimiser of $\J$ is a Barenblatt profile, see~\cite{vazquez}
 \begin{equation*}
   \rhor(x)=\left(\sigma-\frac{m-1}{m}V(x)\right)^{{1}/{(m-1)}}_+ \forall x\in \R^d,
 \end{equation*}
 where $\sigma>0$ is such that $\displaystyle\int_{\R^d} \rhor=1$. Besides, we have
 $$\dom \partial \J=\left\{\rho\in \PP_2(\R^d):\rho \in W^{1,m}_{\rm loc}(\R^d), \nabla \left(\frac{m}{m-1}\rho^{m-1}+ V\right) \in L_\rho^2(\R^d)\right\}$$ and 
 $$\|\partial \J^0[\rho]\|_{\rho}^2= \int_{\R^d}\left |\frac{m}{m-1} \nabla (\rho^{m-1}) + \nabla V \right |^2\dd\rho.$$
When $r_0=+\infty$ and $\theta=1/2$ inequality \eqref{eq:lg}  writes
\begin{equation}\label{lgm}
\int_{\R^d}\left |\frac{m}{m-1} \nabla (\rho^{m-1}) + \nabla V \right |^2\dd\rho\ge \cg^2\left( \int_{\R^d} \frac{\rho^{m}}{m-1}\dd x+ \int_{\R^d}  V\dd \rho \right),
\end{equation}
 while ~\eqref{eq:lf} is
 \begin{equation}\label{ohta}
\int_{\R^d} \frac{\rho^{m}}{m-1}\dd x+ \int_{\R^d}  V\dd \rho  \ge \cf \M_2 (\rho,\rhor)^{2}.
 \end{equation}
   By \cite{dolbeault,under} inequality \eqref{lgm} is an instance of Gagliardo-Nirenberg inequality and holds true for \chge{an optimal} $\cg=\sqrt{2K}$. Therefore \eqref{ohta} holds true  for $c_f=K/2$: 
 \begin{equation}\label{CKOI}
  \int_{\R^d}\frac{\rho^{m}}{m-1}\dd x+\int_{\R^d}V\dd\rho\ge \frac{K}{2} \M_2 (\rho,\rhor)^{2}\,, \: \forall \rho\in W^{1,1}_{\rm loc}(\R^d).  \end{equation} 
   This inequality was recently obtained by Ohta-Takatsu in \cite{ohta} by completely different means. Our theorem also ensures that  Ohta-Takatsu inequality implies    Gagliardo-Nirenberg inequality  up to a multiplicative constant.

   \begin{remark}
Inequality  \eqref{CKOI} for the $\M_1$ distance is characterised in~\cite{bobkov} through the moments of the invariant measure. 
   \end{remark}
   
   Observe finally that the application of \chge{Theorem~\ref{t:conv} (ii)} in this framework allows to recover convergence rate results of nonlinear Fokker-Planck/porous medium dynamics   for $m\geq 1-1/d$: 
$$\frac{\!\dd }{\!\dd t }\rho=\Delta \rho^m+\rm{div}\,(\rho \chg{\nabla V}), \; \rho(0)\in \dom \J,$$
see e.g., \cite{dolbeault,carrillo00,carrillo06}.}

\subsubsection*{Logarithmic Sobolev and Talagrand's inequalities: the non-convex case }
\chg{We provide here an important example of a non-convex functional $\J$. Consider $F(s)=s\log s$,  and assume that $V:\R^d\to \R$  is of the  form 
$$V=V_1+V_2,$$
where $V_1$ is $C^2$ with  $\nabla^2V_1\geq K I_d$, $K>0$, and $V_2$ is in $L^{\infty}(\R^n)$. Set  $\mbox{\rm osc}\,(V_2):=\sup V_2-\inf V_2<+\infty$ and $\rhor:=\exp(-V)$. We assume that $\rhor$ is in $\P_2(\R^d)$. Then one has a \chge{logarithmic Sobolev inequality, see Holley-Stroock's article \cite{157}},
\begin{equation}\label{logsobg}
\int_{\R^d}  \left| \nabla \log \rho+ \nabla V \right|^2\dd\rho \ge \cg^2\left(\int_{\R^d}\rho\log \rho\dd x+\int_{\R^d}V\dd \rho\right),  
\end{equation}
where $c_g^2=2K\exp(-\mbox{\rm osc}\,(V_2))$.
Theorem \ref{thm:main} (i) shows that \eqref{logsobg} 
implies
\begin{equation*}
\cf \M_2(\rho,\rho^*)^2\leq  \int_{\R^d} \rho\log \rho \dd x+\int_{\R^d}V\dd \rho,
\end{equation*}
with $c_f=K\exp(-\mbox{\rm osc}\,(V_2))/2$.  This implication was already proved in \cite{ottovil} for non-convex $V$, see also \cite{gigli2013}. A famous family of energies for which \eqref{logsobg} holds, includes  the double-well potential: $x\mapsto ax^4-bx^2$ where $a>0$ (see \cite{157}).  

The application of Theorem~\ref{t:conv} to this setting allows to recover  classical convergence results for Fokker-Planck dynamics in the absence of convexity.}

 \subsection{Potential energies with \chge{$\lambda$-convex} subanalytic functions} 
\chg{Let $V:\RR^d\to \R$ be a  $\lambda$-convex function, $\lambda\in \R$. Assume that there exist $a,b$ in $\R$ with $V(x)\geq -a|x|^2+b$. The functional, called {\it potential energy} is given by
$$\J[\rho]:=\int_{\RR^d}  V \dd \rho.$$
$\J$ is well defined,   lower semi-continuous, and $\lambda$-convex, see \cite[Proposition 9.3.2 p.210]{ags}. 

\smallskip

\subsubsection*{\chge{Lifted convex functions}} We first consider the case when $V$ is convex, {\it i.e.}, $\lambda=0$. We obviously have $\argmin \J=\{\rho \in \P_2(\R^d): \mbox{supp}\,\rho\subset \argmin V\}$ and $\hat \J=\hat V$. If $\mathcal{S}$ is a subset of $\R^d$, we set $\dist(x,\mathcal{S})=\inf \{|x-y|:y\in \mathcal{S}\}$.

In that case  \L ojasiewicz inequalities can be lifted  in the Monge-Kantorovich space. As a consequence, we will derive the following result:
\begin{theorem}[\L ojasiewicz inequality for a convex potential energy]\label{th3}
Let $V:\RR^d \to \RR$ be a convex function such that $\hat V:=\inf V>-\infty$. If $V$ is subanalytic  with compact level sets then there exist $c_f>0$, $c_g>0$, and $\theta\in (0,1]$ such that $\J$ satisfies the \L ojasiewicz inequalities \eqref{eq:lg} and  \eqref{eq:lf} with $r_0=+\infty$. 
\end{theorem}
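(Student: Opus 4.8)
The plan is to \emph{lift} the finite-dimensional \L ojasiewicz inequalities satisfied by $V$ on $\R^d$ to the potential energy $\J$ on $\PP_2(\R^d)$. First I would record the Euclidean facts. Since $V$ is convex with compact level sets it is coercive, $\argmin V$ is a nonempty compact convex set, and $\hat\J=\hat V$. Subanalyticity yields a \L ojasiewicz gradient inequality near $\argmin V$, and coercivity promotes it to a \emph{global} one on $\R^d$: there are $c>0$ and $\theta\in(0,1]$ such that $c\,(V(x)-\hat V)^{1-\theta}\le|\partial^0 V(x)|$ whenever $V(x)>\hat V$. Running the Euclidean subgradient flow of $V$ and reproducing in $\R^d$ the argument of the proof of Theorem~\ref{thm:main}(i) --- chain rule on $(V-\hat V)^\theta$, finiteness of the curve length --- one then obtains the associated growth inequality $\dist(x,\argmin V)^{1/\theta}\le(1/c')\,(V(x)-\hat V)$ for all $x\in\R^d$ and some $c'>0$.

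Next I would translate the Monge-Kantorovich data. For the potential energy $\J[\rho]=\int_{\R^d}V\dd\rho$ one has, classically (see \cite{ags}), $\dom\partial\J=\{\rho:\partial^0 V\in L^2_\rho(\R^d)\}$ and $\|\partial^0\J[\rho]\|_\rho^2=\int_{\R^d}|\partial^0 V(x)|^2\dd\rho(x)$. Also $\argmin\J=\{\rho\in\PP_2(\R^d):\supp\rho\subset\argmin V\}$, and, writing $\pi$ for the single-valued Euclidean projection onto the compact convex set $\argmin V$, pushing $\rho$ forward by $\pi$ (upper bound, via the coupling $(\id,\pi)_\#\rho$) and comparing with an arbitrary coupling of $\rho$ with any $\sigma\in\argmin\J$ (lower bound, since $|x-y|\ge\dist(x,\argmin V)$ there) gives
$$\M_2(\rho,\argmin\J)^2=\int_{\R^d}\dist(x,\argmin V)^2\dd\rho(x).$$

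The lift is then short: integrating the growth inequality against $\rho$ and using Jensen's inequality for the concave power $t\mapsto t^{2\theta}$ (equivalently, the monotonicity of $p\mapsto\|\,\cdot\,\|_{L^p(\rho)}$ on the probability space $(\R^d,\rho)$),
$$\J[\rho]-\hat\J=\int_{\R^d}(V(x)-\hat V)\dd\rho\ \ge\ c'\!\int_{\R^d}\dist(x,\argmin V)^{1/\theta}\dd\rho\ \ge\ c'\Big(\int_{\R^d}\dist(x,\argmin V)^{2}\dd\rho\Big)^{1/(2\theta)}=c'\,\M_2(\rho,\argmin\J)^{1/\theta},$$
which is exactly the functional \L ojasiewicz property \eqref{eq:lf} with $r_0=+\infty$ and $\cf=c'$. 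Since $V$ is convex, $\J$ is convex along generalised geodesics, so Theorem~\ref{thm:main}(ii) then delivers the gradient \L ojasiewicz property \eqref{eq:lg} with the same $\theta$ and $\cg=\cf^{\theta}$, which concludes the proof.

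Two steps demand care. The first is making the \L ojasiewicz inequality for $V$ genuinely \emph{global} rather than local; this is precisely where compactness of the level sets is used, subanalyticity providing a uniform exponent on each sublevel set. The second --- and this is the main obstacle --- is the compatibility between the $L^2$-averaging inherent in the Monge-Kantorovich distance and the \L ojasiewicz exponent: the Jensen step above only runs in the useful direction when $t\mapsto t^{2\theta}$ is concave, i.e.\ when $\theta\le1/2$. For $\theta\le1/2$ the argument above is complete; otherwise one has to work harder, controlling how the mass of $\rho$ spreads over the sublevel sets of $V$ in order to recover a functional inequality of type \eqref{eq:lf} (possibly with an adjusted exponent), and this is the delicate part of the statement.
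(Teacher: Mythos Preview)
Your approach is essentially the paper's: lift the functional \L ojasiewicz inequality of $V$ on $\R^d$ to $\J$ on $\PP_2(\R^d)$ by integration, identify $\M_2(\rho,\argmin\J)^2=\int\dist(x,\argmin V)^2\dd\rho$ via the projection coupling, and then invoke Theorem~\ref{thm:main}(ii) for the gradient inequality. Your Jensen step for $\theta\le1/2$ is exactly the paper's H\"older step, and your identification of the obstacle for $\theta>1/2$ is on the mark.

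The one genuine gap is the case $\theta>1/2$, which you flag but do not close. The paper handles it by a Gagliardo--Nirenberg interpolation applied to $u=\id-\proj_{\argmin V}$: with $p=2$, $q=1/\theta$ and $r>d$, the bound $\|u\|_2\le C\,\|u\|_{1/\theta}^{1-a}\|u\|_{W^{1,r}}^{a}$ (using that $u$ is $1$-Lipschitz, so the $W^{1,r}$ factor is controlled) converts the $L^{1/\theta}(\rho)$ quantity into an $L^2(\rho)$ one at the price of a \emph{smaller} exponent
\[
\theta'=\theta\,\frac{1/2+1/d}{\theta+1/d}<\theta.
\]
Since Theorem~\ref{th3} only asserts the \emph{existence} of some $\theta\in(0,1]$, this suffices. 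The paper further shows by the example $V(x)=|x|$ in $d=1$ (where $\J[\rho_t]=t$ but $\M_2(\rho_t,\delta_0)=\sqrt t$ along $\rho_t=(1-t)\delta_0+t\delta_1$) that this exponent loss is unavoidable: the curvature of $(\PP_2,\M_2)$ genuinely flattens sharp potentials, so your hope of ``controlling how the mass of $\rho$ spreads'' cannot recover the original $\theta$.
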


The proof of the above theorem ensues from the fact that $V$ satisfies itself the \L ojasiewicz inequality with $r_0=+\infty$ (see e.g. \cite{bolte15}) and from the following equivalence result:

\begin{theorem}[\L ojasiewicz inequality in $\P_2(\R^d)$ is equivalent to \L ojasiewicz inequality in $\RR^d$]
We make the assumptions of Theorem~\ref{th3}. Fix $r_0\in (\hat V,+\infty]$. The two following assertions are equivalent:\\
(i) \chge{There exist $c>0$, $\theta \in (0,1]$ such that for each $x$ in $\R^d$,}
  \begin{equation*}
 \chge{ \hat V< V(x)< r_0} \quad \Rightarrow \quad  c \, \dist(x,\argmin V)^{{1}/{\theta}} \leq \,V(x)- \hat V.
  \end{equation*}
 (ii) \chge{There exist $c'>0$, $\theta' \in (0,1]$ such that for each $\rho$ in $\Pd$,} 
  \begin{equation}\label{eq:lf2}
  \supp \rho \, \subset V^{-1}(\hat V,r_0) \quad \Rightarrow \quad  \chge{c' \, \M_2(\rho,\argmin \J)^{{1}/{\theta'}} \leq \,\J[\rho]- \hat \J.}
  \end{equation}
\end{theorem}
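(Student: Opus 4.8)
The plan is to prove both implications by passing between points of $\R^d$ and their Dirac masses, using that $\M_2(\delta_x,\delta_y)=|x-y|$ and $\J[\delta_x]=V(x)$, together with the structure of $\argmin\J$ recalled just above (supports contained in $\argmin V$).

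First I would prove $(ii)\Rightarrow(i)$, which is the easy direction. Given $x$ with $\hat V<V(x)<r_0$, apply $(ii)$ to $\rho=\delta_x$: its support is the single point $x$, which lies in $V^{-1}(\hat V,r_0)$, so \eqref{eq:lf2} gives $c'\,\M_2(\delta_x,\argmin\J)^{1/\theta'}\le \J[\delta_x]-\hat\J=V(x)-\hat V$. It remains to identify $\M_2(\delta_x,\argmin\J)$ with $\dist(x,\argmin V)$. For any $\bar\rho\in\argmin\J$ one has $\supp\bar\rho\subset\argmin V$, hence $\M_2(\delta_x,\bar\rho)^2=\int |x-y|^2\dd\bar\rho(y)\ge \dist(x,\argmin V)^2$; conversely, since $V$ has compact level sets and is lsc, $\argmin V$ is a nonempty compact set, so the infimum $\dist(x,\argmin V)=|x-y_0|$ is attained at some $y_0\in\argmin V$, and $\delta_{y_0}\in\argmin\J$ realises equality. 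Thus $\M_2(\delta_x,\argmin\J)=\dist(x,\argmin V)$ and $(i)$ follows with $c=c'$, $\theta'=\theta$.

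Next I would prove $(i)\Rightarrow(ii)$, which is where the real work lies. Fix $\rho$ with $\supp\rho\subset V^{-1}(\hat V,r_0)$. By $(i)$, for $\rho$-a.e.\ $x$ we have $c\,\dist(x,\argmin V)^{1/\theta}\le V(x)-\hat V$, i.e.\ $\dist(x,\argmin V)\le \big(c^{-1}(V(x)-\hat V)\big)^{\theta}$. The idea is to build a competitor plan by transporting each $x$ to a nearest point $\pi(x)\in\argmin V$ (a measurable selection of the metric projection onto the compact convex-free… onto the compact set $\argmin V$ exists since the set is closed), pushing $\rho$ forward to $\bar\rho:=\pi_\#\rho\in\argmin\J$. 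Then, using $r_0$ possibly finite so one cannot expect $1/\theta$ itself to survive, one estimates
\begin{align*}
\M_2(\rho,\argmin\J)^2 &\le \M_2(\rho,\bar\rho)^2 \le \int_{\R^d} |x-\pi(x)|^2\dd\rho(x)\\
&= \int_{\R^d}\dist(x,\argmin V)^2\dd\rho(x) \le c^{-2\theta}\int_{\R^d}(V(x)-\hat V)^{2\theta}\dd\rho(x).
\end{align*}
The remaining task is to bound $\int (V-\hat V)^{2\theta}\dd\rho$ by a power of $\int (V-\hat V)\dd\rho=\J[\rho]-\hat\J$. When $2\theta\le 1$ Jensen's inequality gives $\int(V-\hat V)^{2\theta}\dd\rho\le \big(\int(V-\hat V)\dd\rho\big)^{2\theta}=(\J[\rho]-\hat\J)^{2\theta}$ directly, so $\M_2(\rho,\argmin\J)^2\le c^{-2\theta}(\J[\rho]-\hat\J)^{2\theta}$, i.e.\ \eqref{eq:lf2} holds with $\theta'=\theta$ and $c'=c$. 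When $2\theta>1$ (only possible if $\theta>1/2$, so that actually $1/\theta<2$), one instead uses $V(x)-\hat V< r_0-\hat V$ on $\supp\rho$ to write $(V-\hat V)^{2\theta}\le (r_0-\hat V)^{2\theta-1}(V-\hat V)$ when $r_0<\infty$, giving $\M_2(\rho,\argmin\J)^2\le c^{-2\theta}(r_0-\hat V)^{2\theta-1}(\J[\rho]-\hat\J)$, i.e.\ \eqref{eq:lf2} with $\theta'=1/2$; and if $r_0=+\infty$ the exponent $\theta>1/2$ forces a different bookkeeping — here one may simply restrict attention to the correct exponent $\theta'$ so that the inequality is homogeneous, noting that $(i)$ with $r_0=+\infty$ already forces $\theta\le 1$ and the choice $\theta'=\min(\theta,1/2)$ works globally using a Jensen-type split on the sets $\{V-\hat V\le1\}$ and $\{V-\hat V>1\}$.

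The main obstacle I anticipate is precisely this last bookkeeping of exponents: the statement only claims existence of \emph{some} $\theta'$, not preservation of $\theta$, and the honest source of the loss is that $x\mapsto\dist(x,\argmin V)^{1/\theta}$ controls $V-\hat V$ pointwise but integrating against $\rho$ mixes scales, so one genuinely needs either Jensen (when $2\theta\le1$) or a boundedness argument (when $2\theta>1$ and $r_0<\infty$) to close the gap, and one must check that the measurable selection $\pi$ and the integrability $\int\dist(\cdot,\argmin V)^2\dd\rho<\infty$ are legitimate — the latter because $\rho\in\P_2$ and $\argmin V$ is bounded. A secondary subtlety is ensuring $\bar\rho=\pi_\#\rho$ indeed belongs to $\argmin\J$: since $\supp\bar\rho\subset\argmin V$ and $\bar\rho\in\P_2$ (as $\argmin V$ is compact), this is immediate from the characterisation $\argmin\J=\{\rho:\supp\rho\subset\argmin V\}$ recalled above.
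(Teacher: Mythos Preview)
Your argument for $(ii)\Rightarrow(i)$ via Dirac masses coincides with the paper's, as does your treatment of $(i)\Rightarrow(ii)$ when $\theta\le 1/2$: both you and the paper push $\rho$ forward along the metric projection onto $\argmin V$ and then use concavity of $t\mapsto t^{2\theta}$ (the paper phrases this step as H\"older rather than Jensen), recovering the same $\theta'=\theta$ and $c'=c$.

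For $\theta>1/2$ the two approaches diverge. The paper does not split on whether $r_0$ is finite; instead it invokes a Gagliardo--Nirenberg interpolation $\|u\|_2\le C\,\|u\|_{1/\theta}^{1-a}\,\|u\|_{W^{1,r}}^a$ with $r>d$, applied to $u=\mathrm{id}-\proj_{\argmin V}$, in order to pass from control of $\int\dist(\cdot,\argmin V)^{1/\theta}\dd\rho$ to control of $\int\dist(\cdot,\argmin V)^{2}\dd\rho$, and obtains the dimension-dependent exponent $\theta'=\theta\,\dfrac{1/2+1/d}{\theta+1/d}$. Your boundedness trick $(V-\hat V)^{2\theta}\le(r_0-\hat V)^{2\theta-1}(V-\hat V)$ for $r_0<\infty$ is correct and more elementary, at the cost of the cruder exponent $\theta'=1/2$.

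Your proposed patch for $r_0=+\infty$, however, does not close. Splitting on $\{V-\hat V\le 1\}$ versus $\{V-\hat V>1\}$ still leaves you needing $\int_{\{V-\hat V>1\}}\dist(\cdot,\argmin V)^2\dd\rho$ controlled by $\int(V-\hat V)\dd\rho$, but $(i)$ only gives $\dist(\cdot,\argmin V)^2\le c^{-2\theta}(V-\hat V)^{2\theta}$, and on that set $(V-\hat V)^{2\theta}\ge V-\hat V$ since $2\theta>1$; no Jensen-type inequality goes the right way here. Concretely, take $V(x)=|x|$ on $\R$ (so $(i)$ holds with $\theta=1$, $c=1$, $r_0=+\infty$) and $\rho_n=(1-\tfrac1n)\,\delta_{1/n}+\tfrac1n\,\delta_n$: then $\J[\rho_n]\to 1$ while $\M_2(\rho_n,\delta_0)^2=(1-\tfrac1n)n^{-2}+n\to\infty$, so no fixed pair $(c',\theta')$ can work. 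This shows that an argument based solely on the pointwise bound from $(i)$ together with Jensen cannot succeed in the regime $\theta>1/2$, $r_0=+\infty$; the paper's route instead exploits the Lipschitz regularity of $\mathrm{id}-\proj_{\argmin V}$ through the interpolation inequality, an ingredient entirely absent from your sketch.
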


\begin{proof}
$(ii)\Rightarrow (i)$. Take $x\in \R^d$ and write the inequality \eqref{eq:lf2} for $\delta_x$, the Dirac at $x$. The result follows from $\hat \J=\hat V$ and  
$$\dist (x, \argmin V)=\M_2 (\delta_x,\argmin \J).$$ 
\chge{Observe that this implication holds regardless of the value of $\theta$.}
%
%
%

\noindent
$(i)\Rightarrow (ii)$. Assume for simplicity that $\hat V=0$ and thus $\hat \J=0$. Using the \L ojasiewicz inequality for $V$, we have 
$$V(x)\geq c\,\dist (x,\S)^{1/\theta}  \mbox{ for all }x \mbox{ such that }V(x)<r_0,$$
where $\S=\argmin V$. \chge{As a consequence}
\begin{equation}\label{eq:prem}
c^{-1}\J[\rho] \geq \int_{\R^d} \dist(x,\S)^{1/\theta}\dd\rho =  \int_{\R^d} |x-\proj_\S(x)|^{{1/\theta}}\dd\rho\notag
 \end{equation}
\chge{If $\theta \le 1/2$, using H\"older's inequality, we obtain 
\begin{equation}\label{eq:le12}
c^{-1}\J[\rho]  \geq   \left(\int_{\R^d} |x-\proj_\S(x)|^2\dd\rho\right)^{{1/(2\theta)}}. 
 \end{equation}
For $\theta>1/2$,  we need the following Gagliardo-Nirenberg inequality: 
\begin{equation*}
  \|u\|_p \leq C\|u\|_q^{1-a}\|u\|_{W^{1,r}}^a\quad\mbox{with}\quad  a:=\frac{1/q-1/p}{1/q+1/d-1/r}, 
\end{equation*}
$1\le q \le p \le \infty$,  $r>d$ and where $C=C(p,q,d)$. Applying the above  to $u={\rm id}-\proj_\S$ with $p=2$ and $q=1/\theta$ entails 
\begin{equation}\label{eq:qe12*}
c^{-1}\J[\rho]  \geq  \gamma \left(\int_{\R^d} |x-\proj_\S(x)|^2\dd\rho\right)^{{1/(2\theta(1-a))}}
 \end{equation}
 where $\gamma>0$ is an adequate constant.
For any $\theta \in (0,1]$, we can assemble~\eqref{eq:le12} and~\eqref{eq:qe12*} into
\begin{equation}\label{eq:qe12**}
(c')^{-1}\J[\rho]  \geq \left(\int_{\R^d} |x-\proj_\S(x)|^2\dd\rho\right)^{\alpha} 
 \end{equation}
where $\alpha:=1/2\theta$ and $c=c'$ if $\theta \le 1/2$, while $\alpha:=1/(2\theta(1-a))$  and $c'=\gamma c$ otherwise. We finally obtain
\begin{eqnarray*}
(c')^{-1}\J[\rho]  & \geq &  \inf \left\{ \int_{\R^d} |x-T(x)|^2\dd\rho: \,T \mbox{ {\small Borel map on $\R^d$ with} } T(\supp \rho )\subset \S\right\}^{\alpha}\notag\\
 & = &   \M_2(\rho,\argmin \J)^{2\alpha}
 \end{eqnarray*}
When $\theta\in(0,1/2]$, one obtains the result with $\theta'=\theta$ and $c'=c$. Otherwise, for $\theta>1/2$, we choose
\begin{equation}\label{exponent}\theta'=\theta\frac{1/2+1/d}{\theta+1/d}\in (0,1),\end{equation}
which concludes the proof.}\end{proof}

\medskip
\chge{\begin{remark} (a)  With the same assumptions and for $\theta\in (0,1/2]$, the equivalence  between $(i)$ and $(ii)$ holds with $\theta=\theta'$ and $c=c'$.\\
(b)  In the case when $\theta \in (1/2,1]$, the curvature of the Monge-Kantorovich space deteriorates the lifted exponent $\theta'$ by flattening 
the profile $\J$. To see this, consider for $d=1$, the sharp function $V(x)=|x|$ whose 
exponent is $\theta=1$ and the path $(\rho_t)_{t\in[0,1]}=(1-t)\delta_0 + t\delta_1$ in $\P_2(\R^d)$, then $\J[\rho_t]=t$ while $\M_2(\rho_t,\argmin 
\J)=\M_2(\rho,\delta_0)=\sqrt t\,$ for all $t$ in $[0,1]$. Hence the optimal exponent of $\J$ is lower than $1/2$ and thus $\J$ does not inherit of the sharpness of its kernel $V$.

Observe that, in the general case,  the estimation \eqref{exponent}  predicts indeed a lifted exponent with a value lower than $\theta$:
$$\theta'=\theta\,\frac{1/2+1/d}{\theta+1/d}<\theta.$$
\end{remark}}

\bigskip

\subsubsection*{\chge{Non-convex kernels}} When $V$ is non-convex but merely $\lambda$-convex for $\lambda$ in $\RR$, we observe a mixing of critical values of the energy. It is easily seen by considering two arbitrary critical points $x,y$ of $V$ and by considering the segment of critical measures $s\mapsto s\delta_x+(1-s)\delta_y$ whose image by $\J$ is the segment $[V(x),V(y)]$. This shows that the set of critical values of $\J$ is given by the convex envelope of the critical values of $V$, implying a quasi-systematic failure of Sard's theorem. Hence formulating \L ojasiewicz inequalities on slice of level sets is no longer relevant.

 Instead we proceed as follows: 

\begin{lemma}[Lifting the gradient inequality on $\P_2(\R^d)$]\label{loyacV}
 Let $\bar V\in \R$ and $X\subset\R^n$.  Consider $\theta \in (0,1]$ and $c>0$.  If 
\begin{equation}\label{loya1}
   \left| V(x) -\bar V\right|^{1-\theta} \le c\, \left| \nabla V(x)\right|, \mbox{ for all $x$ in } X, 
  \end{equation}
then 
  \begin{equation*}
   \left | \J[\rho] -\bar V\right |^{1-\theta} \le c \left\| \partial \J[\rho]\right\|_\rho,
  \end{equation*}
  for all $\rho$ in $\PP_2(\R^d)$ such that $\supp \rho\subset X$.
  \end{lemma}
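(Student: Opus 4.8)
The plan is to express both sides of the desired inequality as integrals against $\rho$ and reduce the statement to the pointwise hypothesis \eqref{loya1}. First I would recall that for a potential energy $\J[\rho]=\int_{\R^d} V\dd\rho$ with $V$ differentiable and $\lambda$-convex, the minimal norm subgradient is given by the transport of $\nabla V$; more precisely, by the characterisation of $\partial \J$ for potential energies (combine \cite[Proposition 9.3.2 p.210]{ags} with \cite[Theorem 10.4.13 p.273]{ags}), one has $\nabla V \in L^2_\rho(\R^d)$ and
$$\|\partial^0\J[\rho]\|_\rho^2=\int_{\R^d}|\nabla V(x)|^2\dd\rho(x),$$
and since $\|\partial\J[\rho]\|_\rho$ in the statement denotes the minimal-norm element (as throughout the paper), it suffices to bound $|\J[\rho]-\bar V|^{1-\theta}$ by $c$ times this quantity.

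Next I would handle the left-hand side. Since $\supp\rho\subset X$ and $\rho$ is a probability measure, Jensen's inequality applied to $V$ — or rather, the triangle-type bound $|\J[\rho]-\bar V|=\left|\int_{\R^d}(V(x)-\bar V)\dd\rho(x)\right|\le \int_{\R^d}|V(x)-\bar V|\dd\rho(x)$ — gives control of $|\J[\rho]-\bar V|$ by the average of $|V-\bar V|$. Then, because $t\mapsto t^{1-\theta}$ is concave on $[0,+\infty)$ (as $1-\theta\in[0,1)$), a second application of Jensen yields
$$|\J[\rho]-\bar V|^{1-\theta}\le \left(\int_{\R^d}|V(x)-\bar V|\dd\rho(x)\right)^{1-\theta}\le \int_{\R^d}|V(x)-\bar V|^{1-\theta}\dd\rho(x).$$
Now invoke the pointwise \L ojasiewicz inequality \eqref{loya1}, valid for every $x\in X\supset\supp\rho$, to bound the integrand by $c\,|\nabla V(x)|$, so that
$$|\J[\rho]-\bar V|^{1-\theta}\le c\int_{\R^d}|\nabla V(x)|\dd\rho(x)\le c\left(\int_{\R^d}|\nabla V(x)|^2\dd\rho(x)\right)^{1/2}=c\,\|\partial\J[\rho]\|_\rho,$$
where the last inequality is Cauchy--Schwarz (Jensen for the square) using that $\rho$ is a probability measure. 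This is exactly the claim.

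The only genuine subtlety — and the step I expect to require the most care — is the identification of the slope: one must ensure that $\rho$ lies in $\dom\partial\J$ whenever $\supp\rho\subset X$ and the right-hand side of \eqref{loya1} is meaningful, so that $\|\partial\J[\rho]\|_\rho$ is finite and equals $\|\nabla V\|_{L^2_\rho}$; if $\nabla V\notin L^2_\rho$ the inequality is trivially true since the right-hand side is $+\infty$. Everything else is two applications of Jensen's inequality (for the concave map $t\mapsto t^{1-\theta}$ and for the convex map $t\mapsto t^2$) together with the hypothesis. One should also note that no convexity of $V$ beyond $\lambda$-convexity is used, and that $\bar V$ need not be a critical value — the argument is purely measure-theoretic once the pointwise bound is granted.
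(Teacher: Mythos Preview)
Your proof contains a genuine error in the second Jensen step. You claim that concavity of $t\mapsto t^{1-\theta}$ on $[0,\infty)$ yields
\[
\left(\int_{\R^d}|V(x)-\bar V|\dd\rho(x)\right)^{1-\theta}\le \int_{\R^d}|V(x)-\bar V|^{1-\theta}\dd\rho(x),
\]
but Jensen's inequality for a \emph{concave} function $\phi$ reads $\phi\!\left(\int f\dd\rho\right)\ge \int \phi(f)\dd\rho$, which is exactly the reverse. A quick sanity check: with $\theta=1/2$ and $|V-\bar V|$ taking the values $0$ and $4$ each with $\rho$-mass $1/2$, the left side equals $\sqrt{2}$ while the right side equals $1$. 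So the chain of inequalities is broken precisely at the point where you invoke concavity.

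The fix adopted in the paper is to reorganise the order of operations: raise the pointwise hypothesis to the power $1/(1-\theta)$ to obtain $|V(x)-\bar V|\le c^{1/(1-\theta)}|\nabla V(x)|^{1/(1-\theta)}$, integrate this against $\rho$, and then apply H\"older's inequality (monotonicity of $L^p$ norms on a probability space) to pass from $\int|\nabla V|^{1/(1-\theta)}\dd\rho$ to $\big(\int|\nabla V|^{2}\dd\rho\big)^{1/(2(1-\theta))}$; raising the resulting bound to the power $1-\theta$ gives the claim. In other words, one bounds $|\J[\rho]-\bar V|$ itself by $c^{1/(1-\theta)}\|\partial\J[\rho]\|_\rho^{1/(1-\theta)}$ rather than trying to bound its $(1-\theta)$-th power directly. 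Your steps~1, 3 and~4 (triangle inequality, use of the pointwise hypothesis, Cauchy--Schwarz) and your discussion of the identification $\|\partial\J[\rho]\|_\rho=\|\nabla V\|_{L^2_\rho}$ are all sound; only the ordering and direction of the Jensen/H\"older application need to be corrected.
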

\begin{proof}

Take $\rho$ such that $\supp (\rho) \subset X$. By H\"older's inequality and~\eqref{loya1}, we have
\begin{eqnarray*}
    \left | \J[\rho]-\bar V\right |  & =  & \left| \int_X \rho(x)(V(x)-\bar V)\dd x\right|\\
    &  \le & c^{1/(1-\theta)}\int_X \rho(x)\left| \nabla V(x)\right|^{1/(1-\theta)}\dd x\\
        &  \le & c^{1/(1-\theta)}\left[\int_X \rho(x)\left| \nabla V(x)\right|^2\dd x\right]^{1/(2(1-\theta))}\\
    & = & c^{1/(1-\theta)}\left\| \partial \J[\rho]\right\|_\rho^{1/(1-\theta)}.
\end{eqnarray*}
Which is the stated result.
\end{proof}

As a consequence we obtain the following general \L ojasiewicz gradient inequality for the class of potential energies.

\begin{proposition}[\L ojasiewicz gradient inequalities for potential energies]
Let $V:\R^d\to \R$ be a differentiable subanalytic  function and $\C$ a connected component of~$\nabla V^{-1}(0)$. 

(i) $\J$ is constant on the set of measures having support in $\C$,  we denote by $\bar \J$ this value.

(ii) Fix  $R>0$. There exist $\epsilon>0$, $c>0$, and $\theta\in (0,1]$ such that
 \begin{equation*}
   \left | \J[\rho] -\bar \J\right |^{1-\theta} \le c \left\| \partial \J[\rho]\right\|_\rho,
  \end{equation*}
  for all $\rho$ in $\PP_2(\R^d)$ such that $\supp \rho \subset \left\{x\in \R^d: \dist(x,\C)< \epsilon, |x|\leq R \right\}$.
  
\end{proposition}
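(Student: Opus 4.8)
The plan is to combine the classical \L ojasiewicz gradient inequality for subanalytic functions (valid around a connected component of the critical set) with the lifting Lemma~\ref{loyacV}, and to handle the issue that the reference value $\bar V$ of $V$ on $\C$ may a priori vary, by first proving statement (i).

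\textbf{Step 1: proof of (i).} Since $\C$ is a connected component of $\nabla V^{-1}(0)$, the function $V$ is locally constant on $\C$; by connectedness $V\equiv \bar V$ on $\C$ for some $\bar V\in\R$. Now take any $\rho\in\PP_2(\R^d)$ with $\supp\rho\subset\C$; then $\J[\rho]=\int_{\C} V\dd\rho=\bar V\int_{\C}\dd\rho=\bar V$. This shows $\J$ is constant, equal to $\bar \J:=\bar V$, on that family of measures.

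\textbf{Step 2: the \L ojasiewicz inequality for $V$ near $\C$.} Here we invoke that a differentiable subanalytic function satisfies the \L ojasiewicz gradient inequality on a neighbourhood of any compact subset of a connected component of its critical set: fixing $R>0$, the set $K:=\{x:\dist(x,\C)\le \epsilon_0,\,|x|\le R\}$ is compact for $\epsilon_0$ small, and $\nabla V^{-1}(0)\cap K$ meets only the component $\C$ provided $\epsilon_0$ is small enough (other components of the critical set are at positive distance from $\C\cap\{|x|\le R\}$, again by subanalyticity / local finiteness of components on compacts). The standard \L ojasiewicz theorem (see e.g.~\cite{loja65,bolte10,bolte15}) then yields $\epsilon\in(0,\epsilon_0]$, $c>0$, and $\theta\in(0,1]$ such that
$$\left|V(x)-\bar V\right|^{1-\theta}\le c\,\left|\nabla V(x)\right|\quad\text{for all }x\in X:=\{x\in\R^d:\dist(x,\C)<\epsilon,\,|x|\le R\}.$$

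\textbf{Step 3: lifting.} Apply Lemma~\ref{loyacV} with this $X$, $\bar V$, $c$, $\theta$. It gives directly, for every $\rho\in\PP_2(\R^d)$ with $\supp\rho\subset X$,
$$\left|\J[\rho]-\bar\J\right|^{1-\theta}=\left|\J[\rho]-\bar V\right|^{1-\theta}\le c\,\|\partial\J[\rho]\|_\rho,$$
which is exactly (ii). (One may, as in Section~\ref{sec:nrj}, read $\|\partial\J[\rho]\|_\rho$ as $\|\partial^0\J[\rho]\|_\rho$; since the bound holds for the minimal-norm element it holds for every $\nu\in\partial\J[\rho]$.)

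\textbf{Main obstacle.} The only genuinely delicate point is Step~2: ensuring that on a compact tube around $\C\cap\{|x|\le R\}$ the critical set reduces to $\C$, so that a single \L ojasiewicz exponent and constant work uniformly there. This rests on the local finiteness of connected components of subanalytic sets on compact sets, which guarantees that the other components stay at positive distance, hence that $\epsilon$ can be chosen small enough; once this is in place, the lifting via Lemma~\ref{loyacV} is routine H\"older-type estimation and carries no curvature-related loss because the inequality is linear in $\rho$.
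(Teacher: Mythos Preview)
Your proof is correct and follows essentially the same approach as the paper: constancy of $V$ on $\C$ (the paper invokes subanalyticity and continuity, you invoke connectedness---both ultimately rely on the fact that a connected subanalytic set is arc-connected so that $\nabla V=0$ on $\C$ forces $V$ constant), then the classical \L ojasiewicz inequality combined with a compactness argument to get a uniform exponent on the tube $X$, and finally the lifting Lemma~\ref{loyacV}. Your Step~2 spells out more carefully than the paper why other critical components can be excluded from the tube, which is a welcome clarification of what the paper summarises as ``a compactness argument''.
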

\begin{proof} Standard results on subanalytic geometry can be found in \cite{loja65}. Observe first that, by subanaliticity and continuity, $V$ must be constant on $\C$. We then combine the classical \L ojasiewicz inequality  and a compactness argument to obtain a uniform inequality on $$X:=\left\{x\in \R^d: \dist(x,\C)< \epsilon, |x|\leq R \right\}$$ for some $\epsilon>0$. The conclusion follows by Lemma~\ref{loyacV}.\end{proof}

Potential energies provide straightforward examples of convergence of subgradient flows of the form (see \cite[Example 11.2.2, p. 298]{ags}):
\begin{eqnarray}
\frac{\!\dd}{\!\dd t} \rho(t)+\nabla\cdot(\rho(t)v(t)) = 0& & \mbox{a.e. on }(0,+\infty),\\
v(t)\in \partial V (\rho(t))& &\mbox{a.e. on }(0,+\infty),
\end{eqnarray}
covering all possible convergence rates  since $\theta$ may range in $(0,1]$ (use Theorem \ref{t:conv}).  For instance when the norm of the subgradients of $V$ are bounded away from zero at each point, save of course at minimisers, the functional $\J$ is sharp and convergence of the associated flow occurs in finite time.

\section{Appendix: Notations and fundamental results}\label{s:app}
Let us remind here a few elements of formal geometry of the probability measures with the Monge-Kantorovich distance. General monographs on the subject are \cite{villani03,ags,users,santambrogio15}.
\subsection{Monge and Kantorovich's problems}
Let $X$ and $Y$ be two metric spaces equipped res\-pec\-tively with the Borel probability measures $\mu\in \P(X)$ and $\nu\in \P(Y)$. For $\mu\in \P(X)$ and a Borel map $T:X\to Y$, $T_\#\mu$ denotes the {\it push-forward} of $\mu$ on $\nu$ through $T$ which is defined by $T_\#\mu(B)=\mu(T^{-1}(B))$ for every Borel subset $B$ of $Y$ or equivalently by the change of variables formula
\begin{equation*}
\int_Y \varphi(x) \dd T_\#\mu(x)=\int_X \varphi(T(x)) \dd\mu(x), \; \forall \varphi \in \C_b(X). 
\end{equation*}
A transport map \chge{$T:X\to Y$} between $\mu$ and $\nu$ is a Borel map such that $T_\#\mu=\nu$. 

Assume $X=Y$ and denote by $d$ the distance on $X$. For $(\mu,\nu)\in \P(X)\times \P(X)$  the Monge optimal transport problem  writes  
\begin{equation}
\M_2(\mu,\nu)=\sqrt{\inf \left\{\int_X \chg{d^2(x,T(x))} \dd\mu(x):\mbox{$T$ Borel map s.t. }T_\#\mu=\nu\right\}}.\label{monge}
\end{equation}
$\M_2$ defines a distance on the subset of probabilities on $X$ with finite second-order moments.
\chg{Given $\mathcal{S}\subset \P(X)$ and $\mu$ in $\P(X)$, we set $\M_2(\mu,\mathcal{S})=\inf\left\{\M_2(\mu,\nu):\nu \in \mathcal{S}\right\}$.}

\bigskip

\noindent
From now on, we assume $X=Y=\R^d$ where $d$ is a positive integer. Set 
\begin{equation}
\P_2(\R^d)=\left\{\mu\in \P(\R^d): \int_{\R^d} |x|^2\dd\mu(x)<+\infty \right\}.
\end{equation}
A solution to \eqref{monge} is called {\em an 
 optimal transport}. Such a transport generally exists thanks to:
\begin{theorem}[Brenier's theorem \cite{brenier1991polar}]\label{brenierthm}
 Consider $(\mu,\nu) \in \P_2(\R^d)^2$  and assume that $\mu$ is regular in the sense that each hyper-surface has a null measure. Then the Monge optimal transport pro\-blem has a unique solution $T$ \chg{to~\eqref{monge}}. 
Moreover $T=\nabla u$ $\mu$-a.e. for some convex function $u:\R^d\to\R$ and $\nabla u$ is the unique (up to $\mu$-a.e. equivalence)  gradient of a convex function transporting $\mu$ onto~$\nu$.
\end{theorem}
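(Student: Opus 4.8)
The plan is to follow the classical route through the Kantorovich relaxation and cyclical monotonicity. First I would relax the Monge problem \eqref{monge} to the Kantorovich problem of minimising $\int_{\R^d\times\R^d} |x-y|^2 \dd\gamma$ over the set $\Gamma(\mu,\nu)$ of transport plans, i.e. probability measures $\gamma$ on $\R^d\times\R^d$ with marginals $\mu$ and $\nu$. This set is nonempty (it contains $\mu\otimes\nu$), convex, and tight (hence weakly sequentially compact) because its marginals are fixed; since the cost is continuous and bounded below, it is weakly lower semicontinuous, so the direct method yields an optimal plan $\gamma_*$. Expanding $|x-y|^2=|x|^2-2\langle x,y\rangle+|y|^2$ and using that $\int|x|^2\dd\gamma=\int|x|^2\dd\mu$ and $\int|y|^2\dd\gamma=\int|y|^2\dd\nu$ are constant over $\Gamma(\mu,\nu)$, minimising the quadratic cost is equivalent to maximising the correlation $\int\langle x,y\rangle\dd\gamma$; finiteness of the second moments guarantees all these integrals are finite.

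Next I would show that $\supp\gamma_*$ is cyclically monotone: if it were not, one could find finitely many points $(x_i,y_i)$ in the support along which a cyclic reshuffling $y_i\mapsto y_{i+1}$ strictly lowers the cost, and a standard local perturbation of $\gamma_*$ supported near these points would produce an admissible plan of strictly smaller cost, contradicting optimality. By Rockafellar's theorem a cyclically monotone set is contained in the graph of the subdifferential of a proper lower semicontinuous convex function $u:\R^d\to\R\cup\{+\infty\}$, so $\supp\gamma_*\subset\partial u$.

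The crucial step is then to pass from the plan $\gamma_*$ to a map. A finite convex function is differentiable off a set contained in a countable union of hypersurfaces (its non-differentiability set is $(d-1)$-rectifiable). The regularity assumption on $\mu$ --- that every hypersurface is $\mu$-null --- forces $u$ to be differentiable $\mu$-a.e., and for such $x$ the fibre $\{y:(x,y)\in\partial u\}$ reduces to the single point $\nabla u(x)$. Disintegrating $\gamma_*$ against its first marginal $\mu$ then shows that it is concentrated on the graph of $\nabla u$, i.e. $\gamma_*=(\id\times\nabla u)_\#\mu$. In particular $T:=\nabla u$ satisfies $T_\#\mu=\nu$, so $T$ is an admissible Monge map; since its associated plan is Kantorovich-optimal and every Monge map induces an admissible plan, $T$ solves \eqref{monge} and the Monge and Kantorovich values coincide.

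Finally, for uniqueness I would exploit the convexity of the Kantorovich problem. If $T_1=\nabla u_1$ and $T_2=\nabla u_2$ were two optimal maps that are gradients of convex functions, the induced plans $\gamma_1,\gamma_2$ would both be optimal, hence so would their midpoint $\bar\gamma=\tfrac12(\gamma_1+\gamma_2)$ by linearity of the cost on the convex feasible set. Applying the previous step to $\bar\gamma$ shows it is carried by the graph of a single map $\mu$-a.e.; but its disintegration assigns to $\mu$-a.e. $x$ the measure $\tfrac12(\delta_{T_1(x)}+\delta_{T_2(x)})$, which is a Dirac mass only if $T_1(x)=T_2(x)$, whence $T_1=T_2$ $\mu$-a.e. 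I expect the main obstacle to be the regularity step: making rigorous that $\mu$ gives zero mass to the non-differentiability set of $u$, which is precisely where the hypothesis on $\mu$ enters and which relies on the structural fact that this set lies in a countable union of hypersurfaces.
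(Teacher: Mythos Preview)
The paper does not prove this theorem: Brenier's theorem is stated in the appendix purely as background material, with a citation to \cite{brenier1991polar}, and no argument is given. Your sketch is a correct outline of the classical proof (Kantorovich relaxation, cyclical monotonicity of the optimal support, Rockafellar's theorem, and $\mu$-a.e.\ differentiability of convex functions under the regularity hypothesis), so there is nothing to compare against.
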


In the absence of regularity of the source measure, Monge's formulation can be relaxed into the so-called Kantorovich's formulation 
\[\M_2^2(\mu, \nu):=\min_{\gamma\in \Pi(\mu, \nu)} \iint_{\R^d\times \R^d} \vert x-y\vert^2 \dd\gamma(x,y)\] 
where $\Pi(\mu, \nu)$ is the set of measures in $\R^d\times \R^d$ whose first and second marginals are respectively $\mu$ and $\nu$. This problem always has a solution called an optimal transport plan; the set of such plans is denoted by $\Gamma_{\text{\rm opt}}(\mu,\nu)$.

\subsection{Convexity and geodesics}

Let us consider  real-extended-valued functions $\J:\P_2(\R^d)\to (-\infty,+\infty]$ for which we set $\dom \J=\{\rho \in \P_2(\R^d):\J[\rho]<+\infty\}$. These functionals are called {\em proper} when $\dom \J\neq\emptyset$.

\smallskip

\chg{
For $\rho$, $\mu$, $\nu$ in $\P_2(\R^d)$, we denote by $\Pi(\rho,\mu, \nu)$ the set of measures in $\R^d\times \R^d\times \R^d$ whose first, second and third marginals are respectively $\rho$, $\mu$, and $\nu$. 

\smallskip

From \cite[Definition 9.2.4, p.~207]{ags} we recall:
\begin{definition}[Generalised geodesics and $\lambda$-convexity] \label{defcgg} Set $\pi_{1}(x,y,z)=(x,y)$ and $\pi_{2}(x,y,z)=(x,z)$ for all $x,y,z$ in $\R^d$. 
\begin{itemize} 
\item[(i)]
Let $\rho,\mu_1,\mu_2$ be in $\P_2(\R^d)$. Take  $\Psi\in \Pi(\rho,\mu_1,\mu_2)$ such that $(\pi_{1})_{\#}\Psi\in \go(\rho,\mu_1)$,  $(\pi_{2})_{\#}\Psi\in \go(\rho,\mu_2)$. 
Define 
$$\pi^t=(1-t)\pi_1+t\pi_2, \: t\in [0,1],$$
then the generalised geodesic joining $\mu_1$ to $\mu_2$ with base $\rho$ induced by $\Psi$ is given by:
$$[0,1]\ni t \to \pi^t_{\#}(\Psi).$$
\item[(ii)] Let $\lambda\in \R$. A functional $\J$:  $\P_2(\R^d) \to (-\infty,+\infty]$ is called $\lambda$-\emph{convex along 
generalised geodesics} if for every pair $(\nu_0,\nu_1)\in\dom \J\times \dom \J$ and any generalised geodesic  $\{\nu_t\}_{t\in[0,1]}$  with base $\rho\in \P_2(\R^d) $ induced by $\Psi\in \Pi(\rho,\nu_0,\nu_1)$, one has

\[\J[\nu_t]\leq (1-t) \J[\nu_0]+t\J[\nu_1]-\frac{\lambda}{2}\mathcal{W}^2_{2,\Psi}(\nu_0,\nu_1)\]
for every $t\in[0,1]$ and where
$$\mathcal{W}^2_{2,\Psi}(\nu_0,\nu_1):=\iiint_{\R^d\times \R^d\times \R^d}|x_3-x_2|^2\dd\Psi(x_1,x_2,x_3).$$  
%
\end{itemize}
\end{definition}

\begin{remark}The above definition implies in particular that the domain of $\J$ is convex in the following sense: for any regular measure $\gamma$, and for any measures $\mu$ and $\nu$ in  $\dom \J$, the generalised interpolant
with base $\gamma$ between $\mu$ and $\nu$ exists and lies in $\dom \J$.
\end{remark}
\smallskip

When $\lambda=0$, which is a major focus within this article, the definition simplifies into:
%
%

\begin{definition}[Generalised  convexity] 
A functional $\J$:  $\P_2(\R^d) \to (-\infty,+\infty]$ is called  \emph{convex} (along generalised geodesics)\footnote{Convexity in the Monge-Kantorovich spaces is sometimes referred to as {\em displacement convexity}.} if for every pair $(\nu_0,\nu_1)\in\dom \J\times \dom \J$ and any generalised geodesic  $\{\nu_t\}_{t\in[0,1]}$ between $\nu_0$ and $\nu_1$, one has
\[\J[\nu_t]\leq (1-t) \J[\nu_0]+t\J[\nu_1]\]
for every $t\in[0,1]$.
\end{definition}
}
\subsection{\chg{Metric/Riemannian aspects, \cite{otto01,ags}}}
\chg{We first recall a purely metric notion: a curve $\gamma:(a,b)\to \P_2(\R^d)$ is called absolutely continuous if 
$$\M_2(\gamma(t),\gamma(s))=\int_t^s v(\tau)\dd\tau,\: \forall t,s \in (a,b) $$
where $v$ is $L^1(\R)$. The metric derivative of $\gamma$ at $t\in (a,b)$ is given by 
\begin{equation*}
\left|\frac{\!\dd \gamma }{\!\dd t}\right|(t)=\lim_{h\to 0} \frac{\M_2(\gamma(t+h),\gamma(t))}{|h|}.
\end{equation*}
This quantity is well defined for almost every $t\in (a,b)$, see~\cite[Theorem 1.1.2, p.24]{ags}.}

\smallskip

Let $\rho$ be in $\P_2(\R^d)$, the tangent space to $\P_2(\R^d)$ at $\rho$, written $T_\rho\P_2(\R^d)$, is identified to the subspace of distributions formed by the vectors $s=-\nabla\cdot(\rho\nabla u)$ where $u$ ranges over 
$C^{\infty}(\R^d,\R)$.
The scalar product of two vectors $s_1=-\nabla\cdot(\rho\nabla u_1)$, $s_2=-\nabla\cdot(\rho\nabla u_2)$, is given by
$$\langle s_1,s_2\rangle_\rho=\int_{\R^d}  \nabla u_1\cdot\nabla u_2 \dd\rho.$$
The associated norm is as usual \chg{$\|s\|_\rho:=\sqrt{\langle s,s\rangle_\rho}$. }

\medskip

Let  $\J:\P_2(\R^d)\to (-\infty,+\infty]$ be a   convex function. 
Define the metric (or strong) slope of $\J$ at $\rho\in \dom \J$  by 
$$|\nabla |\J[\rho]=\limsup_{\mu \to \rho}\frac{(\J[\rho]-\J[\mu])^+}{\M_2(\rho,\mu)} \in (-\infty,+\infty].$$
\chg{For the subdifferential of $\J$, we pertain to  the reduced subdifferential \cite[Definition 10.3.1, p.241 and Remark 10.3.3 p.243]{ags} 
which  also admits the equivalent formulation in the case of geodesically convex function, and thus in particular for convex functions in the sense of the previous definition:

\begin{definition}(\cite[Item 10.3.13 p. 243 and Theorem 10.3.6 p. 244]{ags} Reduced subdifferential) Let $\J:\P_2(\R^d)\to (-\infty,+\infty]$ be a $\lambda$-convex lower semi-continuous  function bounded from below with $\lambda\in \R$. Take 
$\mu\in \dom \J$ and  $\gamma \in L_{\mu}^2(\R^d)$. Then 
\begin{align*}
 &\gamma\in \partial \J[\mu]&&\Longleftrightarrow &&\forall \nu \in \P_2(\R^d), \exists\Psi \in \go(\mu,\nu), \\
 & &&&&\J[\nu]\geq \J[\mu]+\int_{\R^d\times \R^d} \gamma(x)\cdot (y-x)\dd \Psi(x,y)+\frac{\lambda}{2}\M_2^2(\mu,\nu).
\end{align*}
 When $\mu\notin \dom \J$, the set $\partial \J[\mu]$ is empty.
\end{definition}

One defines {\em the minimal norm subgradient}, whenever it exists, by
\begin{equation*}
\partial^0\J[\mu]=\argmin \{\|\gamma\|_{\mu}: \gamma\in \partial \J[\mu]\}.
\end{equation*}
Using \cite[Theorem 10.3.10, p.246]{ags}, we have $\dom \partial \J\subset \dom |\nabla|\J$ and
\begin{equation}\label{pente}
|\nabla|\J[\mu]\leq\|\partial^0 \J[\mu]\|,\quad \forall \mu \in \P_2(\R^d).
\end{equation}
}

\bigskip

\noindent
{\bf Acknowledgements} The second author thanks the Air Force Office of Scientific Research, Air Force Material Command, USAF, under grant number  FA9550-14-1-0500, the  FMJH Program Gaspard Monge in optimization and ANR OMS (ANR-17-CE23-0013-01)  for supporting his research.

The authors are indebted with T. Champion, N. Gozlan, F. Silva, S. Sorin and the referees for their very useful comments.
\bibliographystyle{siam}

\begin{thebibliography}{}

\end{thebibliography}


\begin{thebibliography}{10}

\bibitem{users}
{\sc L.~Ambrosio and N.~Gigli}, {\em A user's guide to optimal transport,} Modelling and optimisation of flows on networks. Springer Berlin Heidelberg, 2013. pp.~1--155.

\bibitem{ags}
{\sc L.~Ambrosio, N.~Gigli, and G.~Savar\'e}, {\em Gradient flows in metric
  spaces and in the space of probability measures}, Lectures in Mathematics,
  Birkh\"auser, 2005.

\bibitem{ane}
{\sc C.~An{\'e}, S.~Blach{\`e}re, D.~Chafa{\"\i}, P.~Foug{\`e}res, I.~Gentil,
  F.~Malrieu, C.~Roberto, and G.~Scheffer}, {\em Sur les in{\'e}galit{\'e}s de
  {S}obolev logarithmiques},  volume 10 of Panoramas et Syntheses [Panoramas and Synth\`eses]. Soci{\'e}t{\'e} Math{\'e}matique de France, Paris, 2000, vol. 5. (2000).

\bibitem{bakry85}
{\sc D.~Bakry and M.~{\'E}mery}, {\em Diffusions hypercontractives}, in
  S{\'e}minaire de Probabilit{\'e}s XIX 1983/84, Springer, 1985, pp.~177--206.

\bibitem{bobkov}
{\sc S.~G. Bobkov and Y.~Ding}, {\em Optimal transport and R{\'e}nyi
  informational divergence}, Electronic Communications in Probability, 2015, vol. 20.

\bibitem{bobkov01}
{\sc S.~G. Bobkov, I.~Gentil, and M.~Ledoux}, {\em Hypercontractivity of
  {H}amilton--{J}acobi equations}, Journal de Math{\'e}matiques Pures et
  Appliqu{\'e}es, 80 (2001), pp.~669--696.

\bibitem{BDLS}
{\sc J.~Bolte, A.~Daniilidis, A.~Lewis, and M.~Shiota}, {\em Clarke
  subgradients of stratifiable functions}, SIAM Journal on Optimization, 18
  (2007), pp.~556--572.

\bibitem{bolte10}
{\sc J.~Bolte, A.~Daniilidis, O.~Ley, and L.~Mazet}, {\em Characterizations of
  {\L}ojasiewicz inequalities: subgradient flows, talweg, convexity},
  Transactions of the American Mathematical Society, 362 (2010),
  pp.~3319--3363.

\bibitem{bolte15}
{\sc J.~Bolte, T.~P. Nguyen, J.~Peypouquet, and B.~Suter}, {\em From error
  bounds to the complexity of first-order descent methods for convex
  functions},  Math. Prog. Ser. A,
    165 (2017), pp. 471--507.

\bibitem{brenier1991polar}
{\sc Y.~Brenier}, {\em Polar factorization and monotone rearrangement of
  vector-valued functions}, Communications on pure and applied mathematics, 44
  (1991), pp.~375--417.



\bibitem{under} {\sc J. A. Carrillo, J\"ungel, A., Markowich, P. A., Toscani, G., and Unterreiter, A.}, {\em Entropy dissipation methods for degenerate parabolic problems and generalized Sobolev inequalities}, Monatshefte f\"ur Mathematik, 133(1), (2001), pp.~ 1--82.


\bibitem{carrillo06}
{\sc J.~A. Carrillo, R.~J. McCann, and C.~Villani}, {\em Contractions in the
  2-Wasserstein length space and thermalization of granular media}, Archive for
  Rational Mechanics and Analysis, 179 (2006), pp.~217--263.

\bibitem{carrillo03}
\leavevmode\vrule height 2pt depth -1.6pt width 23pt, {\em Kinetic
  equilibration rates for granular media and related equations: entropy
  dissipation and mass transportation estimates}, Revista Matematica
  Iberoamericana, 19 (2003), pp.~971--1018.

\bibitem{carrillo00}
{\sc J.~A. Carrillo and G.~Toscani}, {\em Asymptotic {L}1-decay of solutions of
  the porous medium equation to self-similarity}, Indiana University
  Mathematics Journal, 49 (2000), pp.~113--142.

\bibitem{chill03}
{\sc R.~Chill}, {\em On the {\L}ojasiewicz--{S}imon gradient inequality},
  Journal of Functional Analysis, 201 (2003), pp.~572--601.

\bibitem{cordero}
{\sc D.~Cordero-Erausquin, W.~Gangbo, and C.~Houdr{\'e}}, {\em Inequalities for
  generalized entropy and optimal transportation}, Contemporary Mathematics,
  353 (2004), pp.~73--94.

\bibitem{dolbeault}
{\sc M.~Del~Pino and J.~Dolbeault}, {\em Best constants for
  {G}agliardo--{N}irenberg inequalities and applications to nonlinear
  diffusions}, Journal de Math{\'e}matiques Pures et Appliqu{\'e}es, 81 (2002),
  pp.~847--875.

\bibitem{djellout04}
{\sc H.~Djellout, A.~Guillin, and L.~Wu}, {\em Transportation
  cost-information inequalities and applications to random dynamical systems
  and diffusions}, The Annals of Probability, 32 (2004), pp.~2702--2732.

\bibitem{gangbo96}
{\sc W.~Gangbo and R.~J. McCann}, {\em The geometry of optimal transportation},
  Acta Mathematica, 177 (1996), pp.~113--161.
 
 \bibitem{gigli2013} {\sc N. Gigli and M. Ledoux,} {\em From log Sobolev to Talagrand: a quick proof}, Discrete and Continuous Dynamical Systems-Series A, (2013).  

\bibitem{gozlan2010} {\sc N. Gozlan and C. Léonard},  {\em Transport inequalities. A survey},
 Markov Process. Related Fields, 
16(4) (2010), pp. 635--736. 


\bibitem{haraux}
{\sc A.~Haraux and M.~A. Jendoubi}, {\em Convergence of bounded weak solutions
  of the wave equation with dissipation and analytic nonlinearity}, Calculus of
  Variations and Partial Differential Equations, 9 (1999), pp.~95--124.

\bibitem{157}
{\sc R. Holley and D. Stroock}, {\em Logarithmic Sobolev inequalities and stochastic Ising models}, Journal of statistical physics 46 (5), (1987),  pp. 1159--1194.
\bibitem{ohta}
{\sc S.~I. Ohta and A.~Takatsu}, {\em Displacement convexity of generalized
  relative entropies}, Advances in Mathematics, 228 (2011), pp.~1742 -- 1787.

\bibitem{jordan}
{\sc R.~Jordan, D.~Kinderlehrer, and F.~Otto}, {\em The variational formulation
  of the {F}okker--{P}lanck equation}, SIAM journal on mathematical analysis,
  29 (1998), pp.~1--17.

\bibitem{kurd}
{\sc K.~Kurdyka}, {\em On gradients of functions definable in o-minimal
  structures}, in Annales de l'institut Fourier, vol.~48, 1998, pp.~769--783.

\bibitem{ledoux93}
{\sc M.~Ledoux}, {\em In{\'e}galit{\'e}s isop{\'e}rim{\'e}triques en analyse et
  probabilit{\'e}s}, S{\'e}minaire Bourbaki, 35 (1993), pp.~343--375.

\bibitem{ledoux05}
\leavevmode\vrule height 2pt depth -1.6pt width 23pt, {\em The concentration of
  measure phenomenon}, no.~89, American Mathematical Soc., 2005.




\bibitem{loja65}
{\sc S.~{\L}ojasiewicz}, {\em Ensembles semi-analytiques}, Inst.
  Hautes {\'E}tudes Sci., Bures-sur-Yvette,  (1965).


\bibitem{otto01}
{\sc F.~Otto}, {\em The geometry of dissipative evolution equations: the porous
  medium equation}, Communications in Partial Differential Equations, 26 (2001),
  pp.~101--174.

\bibitem{ottovil}
{\sc F.~Otto and C.~Villani}, {\em Generalization of an inequality by
  {T}alagrand and links with the logarithmic {S}obolev inequality}, Journal of
  Functional Analysis, 173 (2000), pp.~361 -- 400.

\bibitem{santambrogio15}
{\sc F.~Santambrogio}, {\em Optimal transport for applied mathematicians},
  Progress in Nonlinear Differential Equations and their applications, 87
  (2015).

\bibitem{simon83}
{\sc L.~Simon}, {\em Asymptotics for a class of non-linear evolution equations,
  with applications to geometric problems}, Annals of Mathematics,  (1983),
  pp.~525--571.
\bibitem{vazquez}
{\sc J.~L. V{\'a}zquez}, {\em The porous medium equation: mathematical theory},
  Oxford University Press, 2007.

\bibitem{villani03}
{\sc C.~Villani}, {\em Topics in optimal transportation}, vol.~58 of Graduate
  Studies in Mathematics, American Mathematical Society, Providence, RI, 2003.

\end{thebibliography}

\end{document}